\voffset=-.6in
\hoffset=-.2in
\documentclass[12pt]{article}
\usepackage{amssymb, amsmath}

\setlength{\oddsidemargin}{.0in}
\setlength{\evensidemargin}{.0in}
\setlength{\textheight}{8.5in}
\setlength{\textwidth}{6.6in}

\newcommand{\qed}{\hskip 5mm \rule{2.5mm}{2.5mm}}
\newcommand{\R}{{\mathbb R}}

\newcommand{\N}{{\mathbb N}}

\newcommand{\proof}{\noindent{\em Proof:\ }}

\begin{document}
\newtheorem{thm}{Theorem}[section]
\newtheorem{defs}[thm]{Definition}
\newtheorem{lem}[thm]{Lemma}
\newtheorem{note}[thm]{Note}
\newtheorem{cor}[thm]{Corollary}
\newtheorem{prop}[thm]{Proposition}
\renewcommand{\theequation}{\arabic{section}.\arabic{equation}}
\newcommand{\newsection}[1]{\setcounter{equation}{0} \section{#1}}
\title{A Hahn-Jordan decomposition and Riesz-Frechet representation theorem in Riesz spaces
      \footnote{{\bf Keywords:} Riesz spaces,  And\^o-Douglas Theorem, Radon-Nikod\'ym Theorem, 
       Hahn-Jordan decomposition, Riesz-Frechet representation, strong dual, generalized inner products \newline
      {\em Mathematics subject classification (2020):} 47B65, 47B60, 46C50, 46E40, 46A40, 46A20, 46B40.}}
\author{Anke Kalauch\\
Institut für Analysis, FR Mathematik,\\
TU Dresden,\\
D-01062 Dresden, Germany\\ \\
Wenchi Kuo\footnote{Supported in part NRF grant RA191123493824 number 132053.} \& 
Bruce A. Watson \footnote{Supported in part NRF grant SATN180717350298 number 120112 and the Centre for Applicable Analysis and Number Theory.} \\ 
School of Mathematics\\
University of the Witwatersrand\\
Private Bag 3, P O WITS 2050, South Africa}

\maketitle
\abstract{
 We give a Hahn-Jordan decomposition in Riesz spaces which generalizes that of [{{\sc B. A. Watson}, {An And\^o-Douglas type theorem in Riesz spaces with a conditional expectation,} {\em Positivity,} {\bf 13} (2009), 543 - 558}] and a Riesz-Frechet representation theorem for the $T$-strong dual, where $T$ is a Riesz space conditional expectation operator.  
The result of Watson was formulated specifically  to assist in the proof of the existence of Riesz space conditional expectation operators with given range space, i.e., a result of And\^{o}-Douglas type. This was needed in the study of Markov processes and martingale theory in Riesz spaces. In the current work, our interest is a Riesz-Frechet representation theorem, for which another variant of the Hahn-Jordan decomposition is required.
  }
\parindent=0in
\parskip=.2in
\newsection{Introduction}
We present a generalization of the Hahn-Jordan decomposition of measure theory to an abstract form in Riesz spaces with two objectives: first, that it include as an application the Hahn-Jordan decomposition givcn in \cite{watson}; second, that it provide the generalization needed in duality theory for the proof of a Riesz-Frechet representation theorem in Riesz spaces, see later for details. In \cite{watson}, the aim was to prove the existence of Riesz space conditional expectation operators with suitably specified  range spaces. In particular a generalization of the results of And\^{o} and Douglas \cite{ando, douglas} was proved. This provided a critical result for the study of Markov processes in Riesz spaces and martingale theory in Riesz spaces, see \cite{grobler, vardy-watson}. For other results in this area we refer the reader to \cite{h-dp, kusraev}.

The setting used for our Hahn-Jordan decomposition is as follows.
Let $E$ be a Dedekind complete Riesz space with weak order unit, say $e$, and $G$ be a Dedekind complete Riesz subspace of $E$ which also contains $e$,
so $\{e\}\subset G \subset E$.
Denote  the set of all components of $e$ in $E$ by  ${\cal K}(e)$, and by ${\cal K}_G(e)=G\cap {\cal K}(e)$  the set of all components of $e$ in $E$ which are in $G$. 
It should be noted here that ${\cal K}(e)$ and ${\cal K}_G(e)$ are Boolean algebras with ${\cal K}_G(e)$ a subalgebra of ${\cal K}(e)$. 
The multiplication of the components of $e$ is that of the $f$-algebra $E_e$ which is taken to have algebraic unit $e$. Here $E_e$ denotes the ideal in $E$ generated by $e$, i.e., $E_e=\{f\in E\,|\, |f|\le ke \mbox{ for some } k\in\R^+\}$. 
Moreover $p,q$ are components of $e$ in $E$ if and only if there are band projections 
$P,Q$ on $E$ with $p=Pe$ and $q=Qe$. In this case, $pq=PQe=QPe=qp$ and $pq=p\wedge q$. It should be noted that $E$ is an $E_e$-module, see \cite{KLW-exp,KRW}. 
As $e$ is a weak order unit for $G$, we have that $G$ and $E$ are $G_e$-modules. 
Here ${\cal K}_G(e)\subset G_e$.

Let $\cal B$ be a Boolean subalgebra of ${\cal K}(e)$ which contains ${\cal K}_G(e)$ and is order closed in $E$, i.e.,  if $(p_\alpha)$ is a net in ${\cal B}$ with $p_\alpha\downarrow p$ in $E$, then $p\in {\cal B}$.
Take $\psi:{\cal B}\to G$ be a map such that:
\begin{enumerate}
\item[(i)] If $p\in {\cal B}$ and $q\in {\cal K}_G(e)$ then $\psi(pq)=q\psi(p)$.
\item[(ii)] If $p,q\in {\cal B}$ with $pq=0$ then $\psi(p\vee q)=\psi(p+q)=\psi(p)+\psi(q)$ (additivity).
\item[(iii)] If $(p_\alpha)$ is a net in ${\cal B}$ with $p_\alpha\downarrow p$ in $E$, then $\psi(p_\alpha)\to \psi(p)$ (order continuity of $\psi$).
\item[(iv)] There is $g\in E^+$ so that $|\psi(p)|\le g$ for all $p\in{\cal B}$ ($\psi$ is order bounded).
\end{enumerate}

We say that $q\in {\cal B}$ is strongly positive (resp. strongly negative) with respect to $\psi$ if $\psi(p)\ge 0$ (resp. $\le 0$) for all $p\in {\cal B}$ with $p\le q$. 
The Hahn-Jordan decomposition presented in Theorem \ref{Hahn} gives the existence of $q\in {\cal B}$ so that $q$ is strongly positive with respect to $\psi$ and $e-q$ is strongly negative with respect to $\psi$.

As noted earlier, we apply the Hahn-Jordan decomposition, Theorem \ref{Hahn}, in the following setting to obtain a Riesz-Frechet representation of the $T$-strong dual of a Riesz space.
Let $E$ be a Dedekind complete Riesz space with weak order unit.
 By $T$ being a conditional expectation
operator on $E$ we mean that $T$ is a linear positive order continuous projection on $E$ which maps weak order units to weak order units and has range $R(T)$ closed with respect to order limits in $E$. This gives that there is at least one weak order unit, say $e$, with $Te=e$, and that 
$R(T)$ is Dedekind complete when considered as a subspace of $E$. 
By $T$ being strictly positive we mean that if
$f\in E^+$, the positive cone of $E$, and $f\ne 0$ then $Tf\in E^+$ and $Tf\ne 0$.

As shown in \cite{KLW-exp}, a strictly positive conditional expectation operator, $T$, on a Dedekind complete Riesz space with weak order unit, can be extended to a strictly positive conditional expectation operator, also denoted $T$,  on its natural domain, denoted $L^1(T):=\mbox{dom}(T)-\mbox{dom}(T)$.
We say that $E$ is $T$-universally complete if $E=L^1(T)$. From the definition of $\mbox{dom}(T)$, see \cite{KLW-exp}, $E$ is $T$-universally complete if and only
 if for each upwards directed net $(f_{\alpha})_{\alpha \in \Lambda}$ in $E^+$ such that $(Tf_{\alpha})_{\alpha \in \Lambda}$ is order bounded in $E_{u}$, we have that $(f_{\alpha})_{\alpha \in \Lambda}$ is order convergent in $E$. Here $E_u$ denotes the universal completion of $E$, see \cite[page 323]{L-Z}.  $E_u$ has an $f$-algebra structure which can be chosen so that $e$ is the multiplicative identity.

 Let $E$ be a $T$-universally complete Riesz space, where $T$ is a strictly positive conditional expectation   operator on 
  $E$, and let $e$ be a weak order unit for $E$ with $Te=e$.
  From \cite{KRW}  $R(T)$ is universally complete and an $f$-algebra, further  $E=L^1(T)$ is an $R(T)$-module.
 From \cite[Theorem 5.3]{KLW-exp}, $T$ is an averaging operator, i.e., if $f\in R(T)$ 
and $g\in E$ then $T(fg)=fT(g)$.
 This prompts the definition of an $R(T)$ (vector valued) norm $\|\cdot\|_{T,1}:=T|\cdot|$ on $L^1(T)$. 
The homogeneity is with respect to multiplication by elements of $R(T)^+$.
The Hahn-Jordan decomposition of \cite{watson} is an application of Theorem \ref{Hahn} with $\psi(p)=T(pf)$ for $p\in{\cal B}$ and $f\in E$, see Section \ref{HJD}.

The definition of the Riesz space
 $L^{2}(T):=\{f\in L^1(T)\,|\, f^2\in L^1(T)\}$ was given in  
 \cite{LW}.
 By the averaging property,  $L^{2}(T)$ is an $R(T)$-module and the map
 $$f\mapsto\|f\|_{T,2}:=(T(f^2))^{1/2}, \quad f\in L^{2}(T),$$
 is an $R(T)$-valued norm on $L^{2}(T)$. Aspects for this development for $L^{p}(T)$ with general  $1<p<\infty$ can be found in \cite{AT, grobler-1}. Here the multiplication is as defined in the $f$-algebra $E_u$, which when restricted to the $f$-algebra $E_e$ is also the multiplication there.
Proofs of various H\"older type inequalties in Riesz spaces with conditional expectation operators can be found in \cite{KRW} and \cite{AT}. In particular,
\begin{equation}\label{holder}
T|fg|\le \|f\|_{T,2}\|g\|_{T,2}, \, \mbox{ for all } f,g\in {{L}}^2(T).
\end{equation}

Let $E=L^{2}(T)$. We say that a map 
${\frak f}:E\to R(T)$
is a $T$-linear functional on $E$ if it is additive, $R(T)$-homogeneous and order continuous. We recall, from \cite[Theorem 4.10]{AB}, that, since $R(T)$ is a Dedekind complete Riesz space and $E$ is a Riesz space, a linear map from $E$ to $R(T)$ is order bounded if and only if it is order continuous. We denote the space of $T$-linear functionals on $E$ by $E^*$ and call it the $T$-dual of $E$. We note that $E^*\subset {\cal L}_b(E, R(T))$, since $R(T)$-homogeneity implies real linearity. Further as $R(T)$ is Dedekind complete, so is ${\cal L}_b(E, R(T))$, see \cite[page 12]{AB}. 

If ${\frak f}\in E^*$ and 
there is $k\in R(T)^+$ such that 
$$|{\frak f}(g)|\le k\|g\|_{T,2},\quad\mbox{for all } g\in E,$$
we say that ${\frak f}$ is $T$-strongly bounded. 
We denote the space of $T$-strongly bounded $T$-linear functionals on $E$ by 
\begin{eqnarray*}
\hat{E}:=\{{\frak f}\in E^*\,|\, {\frak f} \mbox{ $T$-strongly bounded}\}
\end{eqnarray*}
and refer to it as the $T$-strong dual of $E$. 
Further,
$$\|{\frak f}\|:=\inf\{k\in R(T)^+\,|\,|{\frak f}(g)|\le k\|g\|_{T,2}\quad\mbox{for all } g\in E\}$$
defines an $R(T)$-valued norm on $\hat{E}$ with 
\begin{eqnarray}\label{norm-bound}
|{\frak f}(g)|\le \|{\frak f}\|\,\|g\|_{T,2}
\end{eqnarray}
 for all $g\in L^2(T)$.

 Applying Theorem \ref{Hahn} in this setting (with $\psi(p)={\frak f}(p)$), see Lemma \ref{hj-dual-lem} below, leads to a Riesz-Frechet representation of $\hat{E}$, see Theorem \ref{R-F}. This, when combined with Proposition \ref{dual-map}, gives the following identification. 
 
\begin{thm}\label{thm-final}
 The map $\Psi$ defined by $\Psi(f)(g):=T_f(g)=T(fg)$ for $f,g\in {{L}}^2(T)$ is a bijection between $E={{L}}^2(T)$ and, its $R(T)$-homogeneous strong dual,
 $\hat{E}$. This map is additive, $R(T)$-homogeneous 
 and $R(T)$-valued norm preserving in the sense that $\|T_f\|=\|f\|_{T,2}$ for all $f\in {{L}}^2(T)$.
\end{thm}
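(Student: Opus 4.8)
The plan is to verify four things separately: that $\Psi$ takes values in $\hat E$ and respects the algebraic structure, that it preserves the $R(T)$-valued norm, that it is injective, and that it is onto. The first three are short consequences of the averaging property and strict positivity of $T$ together with the Hölder inequality \eqref{holder}; the genuine work is surjectivity, which is where the Hahn-Jordan decomposition of Theorem \ref{Hahn} enters.

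First I would check that $T_f\in\hat E$ for each $f\in L^2(T)$. For $f,g\in L^2(T)$ we have $fg\in L^1(T)$ by \eqref{holder}, additivity of $g\mapsto T(fg)$ is immediate, and $R(T)$-homogeneity is the averaging identity $T_f(ag)=T(a\cdot fg)=aT(fg)=aT_f(g)$ for $a\in R(T)$. Order continuity of $g\mapsto T(fg)$ follows from order continuity of $T$ and of multiplication by $f$. Strong boundedness, with the inequality $\|T_f\|\le\|f\|_{T,2}$, is read directly from \eqref{holder}, since $|T_f(g)|=|T(fg)|\le T|fg|\le\|f\|_{T,2}\|g\|_{T,2}$. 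Additivity and $R(T)$-homogeneity of $\Psi$ itself then follow from $T((f_1+f_2)g)=T(f_1g)+T(f_2g)$ and $T(af\cdot g)=aT(fg)$.

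The norm preservation is the one computation I would single out. Testing $T_f$ on $g=f$ gives $T_f(f)=T(f^2)=\|f\|_{T,2}^2\ge 0$, so \eqref{norm-bound} yields $\|f\|_{T,2}^2\le\|T_f\|\,\|f\|_{T,2}$. Writing $n:=\|f\|_{T,2}$ and $w:=\|T_f\|$ in the Dedekind complete $f$-algebra $R(T)$, the Hölder bound already gives $w\le n$, so $n^2\le wn\le n^2$ and hence $(n-w)n=0$ with $0\le n-w\le n$. Since $R(T)$ is a universally complete, hence semiprime, $f$-algebra, a vanishing product of positive elements forces disjointness, so $0=(n-w)\wedge n=n-w$; that is, $\|T_f\|=\|f\|_{T,2}$. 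Injectivity is then immediate: if $\Psi(f_1)=\Psi(f_2)$ then $\|f_1-f_2\|_{T,2}=\|T_{f_1-f_2}\|=0$, whence $T((f_1-f_2)^2)=0$ and strict positivity of $T$ forces $f_1=f_2$.

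The main obstacle is surjectivity: given ${\frak f}\in\hat E$ I must produce $f\in L^2(T)$ with ${\frak f}(g)=T(fg)$ for all $g$. Here I would set $\psi(p):={\frak f}(p)$ on the Boolean algebra ${\cal K}(e)$ of components of $e$ and verify hypotheses (i)--(iv): (i) and (ii) are the $R(T)$-homogeneity and additivity of ${\frak f}$ on disjoint components (using $p\vee q=p+q$ and $p^2=p$), (iii) is order continuity of ${\frak f}$, and (iv) follows from strong boundedness, since $|{\frak f}(p)|\le\|{\frak f}\|(Tp)^{1/2}\le\|{\frak f}\|\,e=\|{\frak f}\|$ as $Tp\le Te=e$. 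Applying Theorem \ref{Hahn} to ${\frak f}-sT_e$ at each level produces a component $q_s$ with $q_s$ strongly positive and $e-q_s$ strongly negative, and this family plays the role of the sublevel sets $\{f\ge s\}$. I would then reconstruct $f$ as a Freudenthal-type spectral integral against the monotone, order-continuous family $(q_s)$ in the universal completion, the strong bound $\|{\frak f}\|\le k\in R(T)^+$ keeping $f$ within $L^2(T)$. That ${\frak f}$ and $T_f$ agree first on components and then, by additivity, density of the linear span of components and order continuity, on all of $L^2(T)$ is the delicate bookkeeping I expect to be the crux; uniqueness of $f$ is already supplied by the injectivity above.
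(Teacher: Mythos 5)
Your treatment of the first three ingredients --- that $T_f\in\hat E$, additivity and $R(T)$-homogeneity of $\Psi$, the norm equality $\|T_f\|=\|f\|_{T,2}$, and injectivity --- is correct and coincides with the paper's Proposition \ref{dual-map}. Your norm argument is in fact a pleasant variant: where the paper multiplies $\|T_f\|\,\|f\|_{T,2}\ge\|f\|_{T,2}^2$ by the canonical partial inverse of $\|f\|_{T,2}$ (its Appendix), you observe that in the unital, hence semiprime, $f$-algebra $R(T)$ the relation $(n-w)n=0$ with $0\le n-w\le n$ forces $n=w$. Both are sound; yours avoids the partial-inverse machinery at this point (though not later, see below).

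The genuine gap is surjectivity, i.e.\ the paper's Theorem \ref{R-F}, which is where almost all of the work lies, and which your proposal only outlines --- you yourself defer it as ``delicate bookkeeping,'' but it is the theorem, not bookkeeping. Concretely, three things are missing. First, the assertion that the Hahn components $q_s$ of ${\frak f}-sT$ form a \emph{monotone} family is true but not free: Theorem \ref{Hahn} gives existence, not uniqueness or nestedness, and the monotonicity needs the observation that for $s<t$ the component $p:=q_t(e-q_s)$ satisfies $tT(p)\le{\frak f}(p)\le sT(p)$, whence $T(p)=0$ and strict positivity of $T$ gives $p=0$; this (equivalently, the disjointness of the slices $h^n_k=q^+_{{\frak f}-2^{-n}kT}(e-q^+_{{\frak f}-2^{-n}(k+1)T})$ used in the paper) is what makes any spectral sum well defined. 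Second, your claim that the strong bound ``keeps $f$ within $L^2(T)$'' is precisely the step that requires proof: the paper squares the partial sums $s_n=\sum_k \frac{k}{2^n}h^n_k$, uses (\ref{norm-bound}) to get $T(s_n^2)\le\|{\frak f}\|^2$ after cancelling with a partial inverse, and then invokes $T$-universal completeness of $L^1(T)$ twice to obtain $s\in L^2(T)$. Third, the identification and the sign issue: the paper derives the sandwich $T(ps_n)\le{\frak f}(pq^+)\le T(ps_n)+2^{-n}T(pq^+)$, passes to the order limit, extends via Freudenthal to ${\frak f}(gq^+)=T(gs)$, and then must run the entire construction again on $-{\frak f}$ to produce $\sigma$ and $q^-$, set $y={\frak f}\mapsto s-\sigma$, and check the cross terms ${\frak f}(gq^+q^-)=0={\frak f}(g(e-q^+)(e-q^-))$; your single family of nonnegative levels only represents ${\frak f}$ on the band generated by $q^+$, so without the second run the representation is incomplete. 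In short: your strategy is the paper's strategy, but the surjectivity half as written is a program rather than a proof, and the deferred steps are exactly where the content of Theorems \ref{R-F} and \ref{thm-final} resides.
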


 In the Appendix we present the required concept of a partial inverse in Riesz spaces along with some of its properties.

\newsection{Preliminaries}\label{prelim}
For essential background in Riesz space theory we refer the reader to \cite{Abr-Ali, AB, L-Z, zaanen-2, zaanen}. First we discuss properties of the Boolean subalgebra ${\cal B}$ of ${\cal K}(e)$ and the map
$\psi:{\cal B}\to G$ with the conditions (i) to (iv) given in the introduction.

Concerning condition  (i), $q\in {\cal K}_G(e)$ gives that $q\in{\cal B}$, and ${\cal B}$ is a Boolean algebra, so $pq\in {\cal B}$.  Thus $\psi(pq)$ is defined and is in $G$. Further, $\psi(p)\in G$ and 
$G$ is a $G_e$ module, but $q\in {\cal K}_G(e)\subset G_e$ so ensuring that $q\psi(p)$ is defined in $G$.
By (i) with $q=0$ we have $\psi(0)=0$.
With regards to condition (iii), we note that it also holds for upward directed nets.

For $q\in {\cal B}$ we set 
$$C(q):=\{\psi(p)\,|\,p\in{\cal B}, p\le q\}.$$
Since $0\in {\cal B}$, $0\le q$ and $\psi(0)=0$ we have that $0\in C(q)$. Further, as $\psi$ is order bounded, so is $C(q)$.
Thus, as $G$ is Dedekind complete, we can define 
$$\alpha(q):=\sup C(q),$$
and here $\alpha(q)\in G^+$, since $0\in C(q)$.

 For $q\in{\cal B}$ let 
 $${\cal M}(q):= \{p\in{\cal B}\ |\ 2\psi(p)\ge p\alpha(q), p\le q\}.$$
It should be note that the $2$ in the above inequality (and the resultant work below) could be replaced by any fixed real number greater than $1$. For all $p\le q$ we have $p\psi(p)\le p\alpha(q)$, but ${\cal M}(q)$ gives those $p\le q$ for which $p\psi(p)$ is close to $p\alpha(q)$ (to within a given multiplicative factor of $p\alpha(q)$, which we have chosen as $1/2$).

\begin{lem}\label{maximal}
 For each $q\in{\cal B}$,  $0\in{\cal M}(q)$ and ${\cal M}(q)$ has a maximal element $\hat{q}\in{\cal B}$.
 \end{lem}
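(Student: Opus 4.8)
The plan is to establish $0\in\mathcal{M}(q)$ by direct substitution and the existence of a maximal element by Zorn's lemma applied to $\mathcal{M}(q)$ ordered by the Riesz space order.

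Since $0\in\mathcal{B}$ with $\psi(0)=0$ (as noted after condition (i)) and $0\le q$, the defining inequality becomes $2\psi(0)=0\ge 0=0\cdot\alpha(q)$, so $0\in\mathcal{M}(q)$; in particular $\mathcal{M}(q)$ is non-empty. For the maximal element I would verify the hypothesis of Zorn's lemma, namely that every chain in $\mathcal{M}(q)$ has an upper bound in $\mathcal{M}(q)$. Let $(p_\alpha)$ be such a chain. Regarded as a net indexed by itself it is upward directed, and as each $p_\alpha\le q\le e$, Dedekind completeness of $E$ gives $p:=\sup_\alpha p_\alpha\in E$ with $p_\alpha\uparrow p$ and $p\le q$. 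It then remains to show that $p\in\mathcal{B}$ and that $2\psi(p)\ge p\alpha(q)$, and this is where the content lies.

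To see $p\in\mathcal{B}$ I would pass to complements, since order-closedness of $\mathcal{B}$ is stated only for downward directed nets. As $\mathcal{B}$ is a Boolean subalgebra it is closed under $p\mapsto e-p$, and $p_\alpha\uparrow p$ gives $e-p_\alpha\downarrow e-p$; order-closedness yields $e-p\in\mathcal{B}$, whence $p=e-(e-p)\in\mathcal{B}\subseteq\mathcal{K}(e)$. To obtain the inequality I would take order limits in $2\psi(p_\alpha)\ge p_\alpha\alpha(q)$. The order continuity of $\psi$ on upward directed nets (the remark following condition (iii)) gives $\psi(p_\alpha)\to\psi(p)$; writing $p_\alpha=P_\alpha e$ and $p=Pe$ for band projections with $P_\alpha\uparrow P$, the order continuity of band projections on the positive element $\alpha(q)\in G^+$ gives $p_\alpha\alpha(q)=P_\alpha\alpha(q)\uparrow P\alpha(q)=p\alpha(q)$. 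Since $2\psi(p_\alpha)-p_\alpha\alpha(q)\ge 0$ for every $\alpha$ and order convergence is linear, the limit $2\psi(p)-p\alpha(q)$ is an order limit of positive elements, hence positive because the positive cone of $E$ is closed under order limits. Thus $p\in\mathcal{M}(q)$ is the required upper bound, and Zorn's lemma furnishes a maximal element $\hat{q}\in\mathcal{M}(q)\subseteq\mathcal{B}$.

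I expect the main obstacle to be the preservation of the defining inequality under the order limit: this needs the order continuity of $\psi$ for increasing nets (available only through the remark extending condition (iii)), together with the order continuity of multiplication by the fixed positive element $\alpha(q)$ and the order-closedness of the cone, so that the non-strict inequality survives passage to the supremum.
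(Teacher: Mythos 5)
Your proof is correct and follows essentially the same route as the paper's: check $0\in\mathcal{M}(q)$ directly, then apply Zorn's lemma by showing each chain has its supremum in $\mathcal{M}(q)$, using Dedekind completeness, order-closedness of $\mathcal{B}$, the upward-net version of condition (iii), and passage of the inequality to the order limit. In fact you are slightly more careful than the paper at one point: you justify $p\in\mathcal{B}$ by passing to complements $e-p_\alpha\downarrow e-p$ (since order-closedness is stated only for downward nets), whereas the paper invokes order-closedness for the increasing net without comment.
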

 
\proof
As $0\in G\subset {\cal B}$, $0\le q$ and $2\varphi(0)=0=0\alpha(q)$ it follows that $0\in {\cal M}(q)$.

 Let $(p_\gamma)$ be a totally ordered  increasing net in ${\cal M}(q)$ and $p_\gamma\le e$, so, from the Dedekind completeness of $E$,
 $p_\gamma\to \bar{p}:=\sup_\gamma p_\gamma\in E$. Since ${\cal B}$ is order closed in $E$, $\bar{p}\in{\cal B}$.
  Further, from the order continuity of $\psi$, we have that 
  $$\bar{p}\alpha(q)\leftarrow p_\gamma\alpha(q)\le 2\psi(p_\gamma)\to 2\psi(\bar{p})$$
  in order. Thus
  $\bar{p}\alpha(q)\le 2\psi(\bar{p})$. Also $\bar{p}\leftarrow p_\gamma\le q$ in order, so $\bar{p}\le q$. Hence $\bar{p}\in{\cal M}(q)$.
   Thus $\bar{p}$ is an upper bound for $(p_\gamma)$ from ${\cal M}(q)$ and Zorn's lemma gives that ${\cal M}(q)$
 has a maximal element, say $\hat{q}$.
\qed

\begin{lem}\label{disjointness}
 For $q\in{\cal B}$ and $\hat{q}$ a maximal element of ${\cal M}(q)$, 
 $$(\alpha(q)-2\psi(\hat{q}))^+\hat{q}=0,$$
 i.e.,  $(\alpha(q)-2\psi(\hat{q}))^+$ and $\hat{q}=0$ are disjoint in $E$ and $\hat{q}(\alpha(q)-2\psi(\hat{q}))\le 0$.
 \end{lem}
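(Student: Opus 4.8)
The plan is to derive both assertions directly from the membership $\hat q\in{\cal M}(q)$; the maximality of $\hat q$, although assumed in the statement, plays no role in this particular lemma. By the definition of ${\cal M}(q)$ we have the single inequality $2\psi(\hat q)\ge \hat q\,\alpha(q)$ in $E$, and the entire argument amounts to multiplying this relation by the component $\hat q$ and tracking how multiplication by a component interacts with the order and lattice structure of the $f$-algebra $E_e$.

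First I would record the relevant facts about multiplication by $\hat q$. Since $\hat q$ is a component of $e$, this multiplication coincides with the band projection $P$ determined by $Pe=\hat q$; in particular $\hat q\,\hat q=\hat q\wedge\hat q=\hat q$, multiplication by $\hat q\ge 0$ preserves the order (products of positive elements being positive in an $f$-algebra), and $P$, being a band projection, is a lattice homomorphism and hence commutes with the positive-part operation. Applying $P$ to $2\psi(\hat q)\ge\hat q\,\alpha(q)$ yields $2\hat q\,\psi(\hat q)\ge \hat q\cdot\hat q\,\alpha(q)=\hat q\,\alpha(q)$, that is,
$$\hat q\bigl(\alpha(q)-2\psi(\hat q)\bigr)=\hat q\,\alpha(q)-2\hat q\,\psi(\hat q)\le 0,$$
which is the second of the asserted inequalities.

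Next I would pass to the disjointness statement using the lattice-homomorphism property. Writing $x:=\alpha(q)-2\psi(\hat q)$ and using commutativity of the $f$-algebra together with $\hat q\,(x^+)=(\hat q\,x)^+$, I obtain
$$\bigl(\alpha(q)-2\psi(\hat q)\bigr)^+\hat q=\hat q\,(x^+)=(\hat q\,x)^+=0,$$
the last equality being precisely $\hat q\,x\le 0$ from the preceding step. Finally, from $(\alpha(q)-2\psi(\hat q))^+\hat q=Pw=0$ with $w:=x^+\ge0$, the element $w$ lies in the range of $I-P$, which is the disjoint complement of the range of $P$; since $\hat q=Pe$ lies in the range of $P$, the elements $(\alpha(q)-2\psi(\hat q))^+$ and $\hat q$ are disjoint in $E$, as claimed.

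I do not anticipate a genuine obstacle: the content is entirely the bookkeeping above, and the only points needing care are the justifications that multiplication by the component $\hat q$ acts as a band projection and therefore both preserves the order inequality and commutes with $x\mapsto x^+$. It seems worth flagging explicitly that maximality of $\hat q$ is never invoked---mere membership in ${\cal M}(q)$ suffices---so the conclusion in fact holds verbatim for every element of ${\cal M}(q)$, a point that may be useful later in the Hahn--Jordan construction.
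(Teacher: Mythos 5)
Your proof is correct and follows essentially the same route as the paper's: both use only the membership $\hat q\in{\cal M}(q)$ (maximality is indeed never invoked in the paper's proof either), multiply the defining inequality $2\psi(\hat q)\ge \hat q\,\alpha(q)$ by the component $\hat q$, and then convert $\hat q(\alpha(q)-2\psi(\hat q))\le 0$ into the disjointness claim by band-projection calculus. The only difference is cosmetic: where you invoke the lattice-homomorphism identity $\hat q\,x^+=(\hat q\,x)^+$, the paper instead introduces the component $k=Qe$ of the band generated by $x^+:=(\alpha(q)-2\psi(\hat q))^+$ (for which $kx=x^+$) and squeezes $0\ge \hat q k(2\psi(\hat q)-\alpha(q))\ge 0$.
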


\proof
  Let $Q$ denote the band projection onto the band, in $E$,  generated by 
 $\left(\alpha(q)-2\psi(\hat{q})\right)^+$ and let $k:=Qe$, 
 then $k$ is a component of $e$.
Now
\begin{equation}\label{2021-04-16-ineq-1}
k(2\psi(\hat{q})-\alpha(q))=-k(\alpha(q)-2\psi(\hat{q}))=-(\alpha(q)-2\psi(\hat{q}))^+\le 0.
\end{equation} 
 As $\hat{q}\in {\cal M}(q)$, it follows that $\hat{q}\hat{q}=\hat{q}\ge 0$ and
 $2\psi(\hat{q})\ge \hat{q}\alpha(q)$.
 Thus
\begin{equation}\label{2021-04-16-ineq-2}
\hat{q}(2\psi(\hat{q})-\alpha({q}))=\hat{q}(2\psi(\hat{q})-\hat{q}\alpha({q}))\ge 0.
\end{equation}  
Combining (\ref{2021-04-16-ineq-1}) and (\ref{2021-04-16-ineq-2}) gives
\begin{equation}\label{2021-04-16-ineq-3}
0\ge \hat{q}k(2\psi(\hat{q})-\alpha({q}))=k\hat{q}(2\psi(\hat{q})-\hat{q}\alpha({q}))\ge 0.
\end{equation}  
Thus $$0=\hat{q}k(2\psi(\hat{q})-\alpha({q}))=\hat{q}(2\psi(\hat{q})-\alpha({q}))^+$$
and the disjointness follows.
\qed

\begin{lem}\label{existence-1}
  Let $Q$ denote the band projection onto the band, in $E$,  generated by 
 $\left(\alpha(q)-2\psi(\hat{q})\right)^+$ and let $k:=Qe$, 
 then $k\in {\cal K}_G(e)\subset {\cal B}$. 
 Further,  if $k\alpha(q)>0$, then
 there exists $p_0\in {\cal B}$ with $p_0\le q$ such that
 $$k(2\psi(p_0)-\alpha(q))\not\le 0.$$
\end{lem}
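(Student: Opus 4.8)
The plan is to treat the two assertions separately, regarding the existence of $p_0$ as the substantive part.

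For the first assertion I would note that $k=Qe$ is a component of $e$ simply because $Q$ is a band projection, so $k\in{\cal K}(e)$; the work is to show $k\in G$. Set $w:=(\alpha(q)-2\psi(\hat{q}))^+$. Since each $\psi(p)\in G$ and $\alpha(q)=\sup C(q)$ is formed in the Dedekind complete space $G$ (with this supremum agreeing with the one in $E$, so that the band in $E$ is the expected one), and since $\psi(\hat{q})\in G$, we get $w\in G^+$. In the $f$-algebra with unit $e$ the component of $e$ cut out by $w$ is the order limit $k=\sup_n\big(nw\wedge e\big)$; each $nw\wedge e$ lies in $G$ because $w,e\in G$ and $G$ is a sublattice, so order-closedness of $G$ in $E$ forces the order limit $k$ to lie in $G$. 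Hence $k\in G\cap{\cal K}(e)={\cal K}_G(e)\subset{\cal B}$.

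For the second assertion I would argue by contradiction: suppose that for every $p_0\in{\cal B}$ with $p_0\le q$ one has $k(2\psi(p_0)-\alpha(q))\le 0$, equivalently $2k\psi(p_0)\le k\alpha(q)$. The key identity to establish first is
$$k\alpha(q)=\sup_{p\in{\cal B},\,p\le q} k\psi(p).$$
This follows because multiplication by the component $k$ coincides with the band projection $Q$ on $G$, that is $k\psi(p)=Q\psi(p)$, and because a band projection preserves any existing supremum (as $Q$ and $I-Q$ are both positive with $Q+(I-Q)=I$); applying $Q$ to $\alpha(q)=\sup\{\psi(p)\,|\,p\le q\}$ yields the identity. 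Granting it, the supposition gives $k\psi(p)\le\tfrac12 k\alpha(q)$ for all admissible $p$, and taking the supremum over such $p$ produces $k\alpha(q)\le\tfrac12 k\alpha(q)$, i.e. $k\alpha(q)\le 0$. Since $\alpha(q)\in G^+$ and $k$ is a positive component we also have $k\alpha(q)\ge 0$, whence $k\alpha(q)=0$, contradicting the hypothesis $k\alpha(q)>0$. Therefore some $p_0\le q$ in ${\cal B}$ must satisfy $k(2\psi(p_0)-\alpha(q))\not\le 0$.

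The step I expect to be the main obstacle is the supremum identity $k\alpha(q)=\sup_{p\le q}k\psi(p)$, since it requires both the clean identification of the band projection $Q$ with multiplication by $k=Qe$ on the relevant $G_e$-module, and the interchange of $Q$ with the (not necessarily upward directed) supremum defining $\alpha(q)$. Both rest on the order continuity of band projections and on the module and $f$-algebra structure recorded in the preliminaries, so I would make these two facts explicit before running the contradiction. A secondary, more routine point is the regularity (order-closedness) of $G$ in $E$ invoked in the first assertion to keep the order limit $k=\sup_n(nw\wedge e)$ inside $G$.
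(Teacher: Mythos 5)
Your proof is correct and takes essentially the same route as the paper's: the paper likewise gets $k\in{\cal K}_G(e)$ from $\alpha(q),\psi(\hat{q})\in G$, and then argues by contradiction, multiplying the assumed inequality $2k\psi(p)\le k\alpha(q)$ through the supremum over $p\in{\cal B}$, $p\le q$, to obtain $2k\alpha(q)\le k\alpha(q)$ and hence $k\alpha(q)\le 0$. The only difference is that you spell out the two facts the paper leaves implicit, namely that the component of $e$ generated by an element of $G$ remains in $G$, and that multiplication by the component $k$ (i.e.\ the band projection $Q$) commutes with the supremum defining $\alpha(q)$.
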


\proof
Since  $\alpha(q), \psi(\hat{q})\in G$, we have  $k\in G$ and thus $k\in {\cal B}$. 

 Suppose $k\alpha(q)>0$.  If there does not exist $p_0\in {\cal B}$ with $p_0\le q$ such that
 $$k(2\psi(p_0)-\alpha(q))\not\le 0,$$
 then, for all  $p\in {\cal B}$ with $p\le q$, we have
 $2k\psi(p)\le k\alpha(q).$
Taking the supremum over  $p\in {\cal B}$ with $p\le q$ in the above gives
 $2k\alpha(q)\le k\alpha(q)$. Thus $k\alpha(q)\le 0$ which contradicts the assumption $k\alpha(q)>0$.
 \qed

\begin{thm}\label{lower}
 For each $q\in{\cal B}$, if $\hat{q}$ is a maximal element of ${\cal M}(q)$, then $\hat{q}\in {\cal B}$ with $\hat{q}\le q$ and 
 $\alpha(q)\le 2\psi(\hat{q})\le 2\alpha(q)$.
\end{thm}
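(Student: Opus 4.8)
The plan is to prove the three assertions in order of difficulty, treating the lower estimate $\alpha(q)\le 2\psi(\hat q)$ as the only substantial point. Membership $\hat q\in{\cal M}(q)$ from Lemma~\ref{maximal} already records $\hat q\in{\cal B}$ and $\hat q\le q$, so those claims are immediate. The upper estimate $2\psi(\hat q)\le 2\alpha(q)$ is equally quick: since $\hat q\in{\cal B}$ and $\hat q\le q$, the element $\psi(\hat q)$ lies in $C(q)$, whence $\psi(\hat q)\le\sup C(q)=\alpha(q)$.

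For the lower estimate I would argue by contradiction, exploiting the maximality of $\hat q$. Writing $w:=\alpha(q)-2\psi(\hat q)$, the claim is equivalent to $w^+=0$, so I assume $w^+\ne 0$ and let $k=Qe$ be the component from Lemma~\ref{existence-1}. I would first check that the hypothesis $k\alpha(q)>0$ of that lemma holds automatically: multiplying the defining inequality $2\psi(\hat q)\ge\hat q\alpha(q)$ by $k$ and using $k\hat q=0$ (the disjointness recorded in Lemma~\ref{disjointness}) gives $k\psi(\hat q)\ge 0$, while $kw=w^+\ne 0$ yields $k\alpha(q)=2k\psi(\hat q)+w^+\ge w^+>0$. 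Lemma~\ref{existence-1} then supplies $p_0\in{\cal B}$ with $p_0\le q$ and $k(2\psi(p_0)-\alpha(q))\not\le 0$.

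The heart of the argument is to enlarge $\hat q$ inside ${\cal M}(q)$. I would let $r=Re$ be the component of the band generated by $(k(2\psi(p_0)-\alpha(q)))^+$; since this positive element lies in the band of $k$, one has $r\le k$, hence $r\hat q=0$, and by construction $r(2\psi(p_0)-\alpha(q))=(k(2\psi(p_0)-\alpha(q)))^+\ge 0$. Setting $p':=\hat q+rp_0$, the component $rp_0\in{\cal B}$ is disjoint from $\hat q$, so $p'=\hat q\vee rp_0\in{\cal B}$ with $p'\le q$. Using additivity (ii) and homogeneity (i), $\psi(p')=\psi(\hat q)+r\psi(p_0)$, so that $2\psi(p')-p'\alpha(q)=\bigl(2\psi(\hat q)-\hat q\alpha(q)\bigr)+\bigl(2r\psi(p_0)-rp_0\alpha(q)\bigr)$; the first bracket is nonnegative since $\hat q\in{\cal M}(q)$, and for the second, $r(2\psi(p_0)-\alpha(q))\ge 0$ gives $2r\psi(p_0)\ge r\alpha(q)\ge rp_0\alpha(q)$ (using $rp_0\le r$ and $\alpha(q)\ge 0$). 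Hence $p'\in{\cal M}(q)$.

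Finally I would check that $p'$ strictly exceeds $\hat q$, i.e.\ $rp_0\ne 0$. By construction $2r\psi(p_0)=r\alpha(q)+(k(2\psi(p_0)-\alpha(q)))^+\ne 0$ with $r\alpha(q)\ge 0$, so $r\psi(p_0)\ne 0$; were $rp_0=0$, property (i) would force $r\psi(p_0)=\psi(rp_0)=0$, a contradiction. Thus $p'=\hat q\vee rp_0>\hat q$ lies in ${\cal M}(q)$, contradicting maximality of $\hat q$, so $w^+=0$ and $\alpha(q)\le 2\psi(\hat q)$. The step I expect to demand the most care is the $f$-algebra/module bookkeeping confirming $p'\in{\cal M}(q)$ and $p'>\hat q$: one must track which products collapse via $r\le k$, $k\hat q=0$ and $rp_0\le r$, and recognise that passing from $k$ to the refined component $r$ is precisely what makes the decisive inequality $2r\psi(p_0)\ge r\alpha(q)$ available.
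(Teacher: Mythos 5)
Your proposal is correct and takes essentially the same route as the paper: your component $r$ is precisely the paper's $\bar{q}=kp_1$ (the paper forms $p_1$ from the band generated by $(2\psi(p_0)-\alpha(q))^+$ and multiplies by $k$), and your enlargement $p'=\hat{q}+rp_0$ contradicting maximality is exactly the paper's $\hat{p}=\hat{q}+\bar{q}p_0$. The one point worth making explicit is that $r\in{\cal K}_G(e)$, which your two applications of property (i) require; this holds because $(k(2\psi(p_0)-\alpha(q)))^+\in G$ (as $G$ is a $G_e$-module and a Riesz subspace), or simply because $r=kp_1$ with $k,p_1\in{\cal K}_G(e)$, exactly as in Lemma \ref{existence-1}.
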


\proof
From the definition of $\alpha(q)$, as $\hat{q}\in{\cal M}(q)$, it follows that $2\psi(\hat{q})\le 2\alpha(q)$.

 We now show that $2\psi(\hat{q})\ge \alpha({q})$. 
If this were not the case, then
 $\left(\alpha(q)-2\psi(\hat{q})\right)^+>0$.
  Let $Q$ denote the band projection onto the band, in $E$,  generated by 
 $\left(\alpha(q)-2\psi(\hat{q})\right)^+$ and $k:=Qe>0$.
 As $k\in {\cal K}_G(e)$ so $\psi(k\hat{q})=k\psi(\hat{q})$.  From Lemma \ref{disjointness},  $0=k\hat{q}$,  thus, as 
 $\psi(0)=0$, we have
 $$k\psi(\hat{q})=\psi(k\hat{q})=\psi(0)=0.$$
 Hence
 $$k\alpha(q)=k\left(\alpha(q)-2\psi(\hat{q})\right)=\left(\alpha(q)-2\psi(\hat{q})\right)^+> 0.$$
 By Lemma \ref{existence-1},
 there exists $p_0\in {\cal B}$ with $p_0\le q$ such that
 \begin{equation*}
 k(2\psi(p_0)-\alpha(q))\not\le 0,
 \end{equation*}
 and thus
 \begin{equation}\label{notle}
 k(2\psi(p_0)-\alpha(q))^+=  (k(2\psi(p_0)-\alpha(q)))^+> 0,
 \end{equation}
 as $k\ge 0$.
 
Let $P_1$ be the band projection onto the band generated by $(2\psi(p_0)-\alpha(q))^+$ and $p_1=P_1e\in {\cal K}_G(e)$.
  Let $\bar{q}:=kp_1\in {\cal K}_G(e)$,  then from (\ref{notle}),  $\bar{q}>0$ and
  \begin{equation}\label{pos-1}
   \bar{q}(2\psi(p_0)-\alpha(q))=kp_1(2\psi(p_0)-\alpha(q))=k(2\psi(p_0)-\alpha(q))^+> 0.
\end{equation}
    Further, as $\alpha(q)\ge 0$ and $0\le p_0\le e$,  we have
  $p_0\alpha(q)\le \alpha(q)$. Thus, from equation (\ref{pos-1}) and the definition of $\bar{q}$,
  \begin{equation}\label{new-plus}
   2\bar{q}\psi(p_0)>\bar{q}\alpha(q)\ge \bar{q}p_0\alpha(q).
 \end{equation}
  Here $k, p_1, \bar{q}\in{\cal K}_G(e)$ and $p_0\in {\cal B}$ so  $\bar{q}\psi(p_0)=\psi(\bar{q}p_0)$ and
  $\bar{q}p_0\in {\cal B}$,  thus 
  from (\ref{new-plus}),
  \begin{equation}\label{14-may}
  2\psi(\bar{q}p_0)> \bar{q}p_0\alpha(q),
  \end{equation}
giving   $\bar{q}p_0\in{\cal M}(q)$. Here $\bar{q}p_0>0$, as otherwise (\ref{14-may}) would give $0>0$.
Further
$$\bar{q}p_0=kp_1p_0\le kp_0\le kq$$
and, by Lemma \ref{disjointness},  $k\hat{q}=0$ giving $k\le e-\hat{q}$.  Thus
$$\bar{q}p_0\le q(e-\hat{q})=q-q\hat{q}=q-\hat{q},$$
where we have used that $q\hat{q}=\hat{q}$, since $\hat{q}\le q$. 
Multiplying by $\hat{q}$ gives
$$\hat{q}\bar{q}p_0\le \hat{q}(q-\hat{q})=0,$$
so $\hat{q}\bar{q}p_0=0$ making $\hat{q}$ and $\bar{q}p_0$ disjoint elements of ${\cal B}$.  Hence $\hat{p}=\hat{q}+\bar{q}p_0=\hat{q}\vee\bar{q}p_0\in {\cal B}$ and $q\ge\hat{p}>\hat{q}$.
From Lemma \ref{disjointness}, $\hat{q}(\alpha(q)-2\psi(\hat{q}))\le 0$,  which, together with (\ref{14-may}), gives
$$2\psi(\hat{p})=2\psi(\hat{q})+2\psi(\bar{q}p_0)\ge \hat{q}\alpha(q)+\bar{q}p_0\alpha(q)=\hat{p}\alpha(q),$$
so $\hat{p}\in{\cal M}(q)$, which contradicts  maximality of $\hat{q}$. Thus
 $2\psi(\hat{q})\ge \alpha({q})$. 
  \qed
      
\begin{cor}\label{cor-neg}
 For each $q\in{\cal B}$ there exists $u\in {\cal B}$ with $u\le  P_{\psi(u)}q$, $\psi(u)\ge 0$ and 
 $\psi(u)\le\alpha(q)\le 2\psi(u)$. Here $P_{\psi(u)}$ is the band projection onto the band in $E$ generated by $\psi(u)$.
\end{cor}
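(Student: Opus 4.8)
The plan is to build $u$ from the maximal element $\hat q$ of ${\cal M}(q)$ produced by Lemma \ref{maximal} and already analysed in Theorem \ref{lower}. Indeed, Theorem \ref{lower} supplies $\hat q\in{\cal B}$ with $\hat q\le q$ and $\alpha(q)\le 2\psi(\hat q)\le 2\alpha(q)$. Since $\hat q\in{\cal B}$ and $\hat q\le q$, we have $\psi(\hat q)\in C(q)$, whence $\psi(\hat q)\le\sup C(q)=\alpha(q)$; combined with $\alpha(q)\le 2\psi(\hat q)$ this gives the two-sided estimate $\psi(\hat q)\le\alpha(q)\le 2\psi(\hat q)$. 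Moreover $\alpha(q)\in G^+$, so $2\psi(\hat q)\ge\alpha(q)\ge 0$ forces $\psi(\hat q)\ge 0$. Thus $\hat q$ already meets every numerical requirement of the corollary; the only clause not automatic for $\hat q$ itself is the localisation $u\le P_{\psi(u)}q$.

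To secure that localisation without disturbing the estimates, I would replace $\hat q$ by its projection onto the band generated by $\psi(\hat q)$, i.e.\ set $u:=P_{\psi(\hat q)}\hat q$. Writing $p^\ast:=P_{\psi(\hat q)}e$, the same reasoning as in Lemma \ref{existence-1} (applied now to $\psi(\hat q)\in G$ in place of $(\alpha(q)-2\psi(\hat q))^+$) shows $p^\ast\in{\cal K}_G(e)\subset{\cal B}$, so that $u=p^\ast\hat q$ lies in ${\cal B}$ as a product (meet) of elements of ${\cal B}$. Crucially, condition (i), with the factor $p^\ast\in{\cal K}_G(e)$, gives $\psi(u)=\psi(p^\ast\hat q)=p^\ast\psi(\hat q)=P_{\psi(\hat q)}\psi(\hat q)=\psi(\hat q)$, the last equality holding because $\psi(\hat q)$ lies in the band it generates. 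Hence passing from $\hat q$ to $u$ leaves every $\psi$-value, and in particular the band projection $P_{\psi(u)}=P_{\psi(\hat q)}$, unchanged.

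It then remains only to read off the conclusions. Because $\psi(u)=\psi(\hat q)$, the inequalities $\psi(u)\le\alpha(q)\le 2\psi(u)$ and $\psi(u)\ge 0$ are exactly those established for $\hat q$ above. For the localisation, $u=P_{\psi(u)}\hat q\le P_{\psi(u)}q$, since $\hat q\le q$ by Theorem \ref{lower} and band projections are positive operators. I expect the only genuinely non-formal point to be this localisation clause: the reason one may legitimately project $\hat q$ onto the band of $\psi(\hat q)$ and still remain inside ${\cal B}$ (so that condition (i) is available and $\psi(u)=\psi(\hat q)$) is that the relevant band projection carries $e$ into ${\cal K}_G(e)$. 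That this component stays in $G$ is the structural fact already exploited in Lemma \ref{existence-1}, and it is what makes the whole argument go through.
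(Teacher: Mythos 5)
Your proposal is correct and follows essentially the same route as the paper: both take the maximal element $\hat q$ of ${\cal M}(q)$ from Theorem \ref{lower} and set $u:=P_{\psi(\hat q)}\hat q=g\hat q$ with $g=P_{\psi(\hat q)}e\in{\cal K}_G(e)$. The only (harmless) stylistic difference is that you verify the estimates via the identity $\psi(u)=p^\ast\psi(\hat q)=\psi(\hat q)$, whereas the paper re-derives $2\psi(u)\ge\alpha(q)$ from $g\alpha(q)=\alpha(q)$, the bands of $\psi(\hat q)$ and $\alpha(q)$ coinciding.
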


\proof
Let $q\in{\cal B}$ and $\hat{q}$ be a maximal element of ${\cal M}(q)$.
 From Theorem \ref{lower},  $\hat{q}\in {\cal B}$, $\hat{q}\le q$ and $\psi(\hat{q})\le\alpha(q)\le 2\psi(\hat{q})$.
Since $\alpha(q)\ge 0$, it follows that $\psi(\hat{q})\ge 0$ and the bands generated by $\psi(\hat{q})$ and $\alpha(q)$ coincide, giving $P_{\psi(\hat{q})}\alpha(q)=\alpha(q)$.
Let $g:=P_{\psi(\hat{q})}e$, then $g\alpha(q)=\alpha(q)$, $g\in {\cal K}_G(e)$ and $\hat{q}\in{\cal B}$ so $\psi(g\hat{q})=g\psi(\hat{q})$.
Setting $u:=g\hat{q}$, we obtain
\begin{equation}\label{psi-inequality}
2\psi(u)
 =2\psi(g \hat{q})
 =2g \psi(\hat{q})
 \ge g \alpha(q)=\alpha(q).
 \end{equation}
As $u\le \hat{q}\le q$ it follows that $\psi(u)\le \alpha(q)$, which combined with (\ref{psi-inequality}) gives $\psi(u)\le\alpha(q)\le 2\psi(u)$ and $\psi(u)\ge 0$. 
Finally, as the bands generated by $\psi(u)$ and $\alpha(q)$ coincide and this coincides with the band generated by $\psi(\hat{q})$, we have
$$u=g\hat{q}=P_{\psi(u)}\hat{q}\le P_{\psi(u)}q,$$
since $\hat{q}\le q$. 
\qed

\newsection{Hahn-Jordan decomposition}\label{HJD}

We are now in a position to develop the generalized abstract Hahn-Jordan decomposition, which generalizes \cite[Theorem  3.5]{watson} (details of this are given as an application at the end of his section) and \cite[page 184]{zaanen}.

\begin{thm}\label{strongly}
 If $p\in\mathcal{B}$ with $\psi(p)<0$ then there exists $v\in\mathcal{B}$ with 
 $v\le p$ and $v$ strongly negative with respect to $\psi$ and $\psi(v)\le \psi(p)$.
 \end{thm}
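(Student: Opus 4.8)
The plan is to mimic the classical Hahn decomposition: starting from $p$, I would repeatedly strip off the ``strongly positive pieces'' supplied by Corollary \ref{cor-neg} and show that the decreasing remainder converges in order to the desired strongly negative $v$. Concretely, set $q_0:=p$ and, given $q_n\in{\cal B}$, apply Corollary \ref{cor-neg} to $q=q_n$ to obtain $u_n\in{\cal B}$ with $u_n\le P_{\psi(u_n)}q_n\le q_n$, $\psi(u_n)\ge 0$ and $\alpha(q_n)\le 2\psi(u_n)$. Put $q_{n+1}:=q_n-u_n$, which again lies in ${\cal B}$ (since $u_n\le q_n$ are components and ${\cal B}$ is a Boolean algebra) and satisfies $0\le q_{n+1}\le q_n\le p$. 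Additivity (ii) then gives $\psi(q_{n+1})=\psi(q_n)-\psi(u_n)\le\psi(q_n)$, so the values $\psi(q_n)$ decrease and stay $\le\psi(p)$.

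Next I would pass to the limit. By Dedekind completeness $q_n\downarrow v:=\inf_n q_n$, and $v\in{\cal B}$ because ${\cal B}$ is order closed; order continuity (iii) gives $\psi(q_n)\to\psi(v)$, whence $\psi(v)\le\psi(p)$, and clearly $v\le p$. It then remains to verify that $v$ is strongly negative, i.e.\ that $\psi(p')\le 0$ for every $p'\in{\cal B}$ with $p'\le v$. For such $p'$ we have $p'\le v\le q_n$, so $\psi(p')\in C(q_n)$ and hence $\psi(p')\le\alpha(q_n)$ for all $n$; thus it suffices to prove $\alpha(q_n)\to 0$ in order, since then $\psi(p')\le 0$ follows at once.

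This last convergence is the main obstacle, and it is exactly where order boundedness (iv) enters. The key observation is that the extracted pieces are pairwise disjoint: for $m>n$ we have $u_m\le q_m\le q_{n+1}=q_n-u_n\le e-u_n$, so $u_m\wedge u_n=0$. Consequently the partial suprema $w_N:=\bigvee_{n\le N}u_n$ increase to some $w\in{\cal B}$, and additivity (ii) together with the upward-directed version of order continuity (iii) noted in Section \ref{prelim} yields that the increasing sequence $\psi(w_N)=\sum_{n\le N}\psi(u_n)$ satisfies $\psi(w_N)\uparrow\psi(w)$. From the squeeze $0\le\psi(u_n)=\psi(w_n)-\psi(w_{n-1})\le\psi(w)-\psi(w_{n-1})\downarrow 0$ I obtain $\psi(u_n)\to 0$ in order, and then, using $0\le\alpha(q_n)\le 2\psi(u_n)$, a further squeeze gives $\alpha(q_n)\to 0$, as required. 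The role of the constant $2$ in ${\cal M}(q)$ is precisely to ensure, via Corollary \ref{cor-neg}, that each extraction captures a fixed fraction of the available ``positive potential'' $\alpha(q_n)$, so that the disjoint positive increments $\psi(u_n)$ are forced to vanish and the leftover band cannot retain any positive mass.
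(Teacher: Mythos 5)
Your proof is correct and is essentially the paper's own argument: the same greedy extraction of pairwise disjoint pieces $u_n$ via Corollary \ref{cor-neg}, the same remainder $v=p-\bigvee_n u_n$, and the same key observation that $\psi(p')\le\alpha(q_n)$ for every component $p'\le v$. The only divergence is the finishing step --- the paper bounds $n\psi(p')\le\sum_{i=1}^n\alpha_i\le 2\psi\left(\bigvee_i u_i\right)\le 2g$ and invokes the Archimedean property (which is where hypothesis (iv) genuinely enters, contrary to your remark that (iv) drives your convergence step), whereas you deduce $\alpha(q_n)\downarrow 0$ from order continuity (iii) applied to the order-convergent series $\sum_n\psi(u_n)$; both finishes are valid.
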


\proof
 Let 
 $$\alpha_1:=\alpha(p)\ge 0.$$
 From Corollary \ref{cor-neg}, there is  $p_1\in {\cal B}$ with 
 $p_1\le P_{\psi(p_1)^+}p$ and  $\alpha_1\le 2\psi(p_1)$.
 We define $\alpha_{n+1}$ and $p_{n+1}$ inductively for $n\in\N$ by
 $$\alpha_{n+1}:=\alpha\left(p-\bigvee_{i=1}^n p_i\right)$$
 and, using Corollary \ref{cor-neg}, we take
 $p_{n+1}\in {\cal B}$ with
 $$p_{n+1} \le P_{\psi(p_{n+1})^+}\left(p-\bigvee_{i=1}^n p_i\right)$$ and 
 $\alpha_{n+1}\le 2\psi(p_{n+1}).$
 Here 
 $\displaystyle{p_{n+1}\le p-\bigvee_{i=1}^n p_i}$,
 so $p_{n+1}p_i=0$, for all $i=1,\dots,n$, and thus $p_ip_j=0$ for all $i\ne j$. 

 Since ${\cal B}$ is order closed and $E$ is Dedekind complete, we can define
 $$\bar{p}:=\sum_{i=1}^\infty p_i=\bigvee_{i=1}^\infty p_i\in{\cal B}.$$
 Here $\bar{p}\le p$.
 The additivity and order continuity of $\psi$ give that
 $$\psi(\bar{p})= \psi\left(\sum_{i=1}^\infty p_i \right)=\sum_{i=1}^\infty \psi(p_i),$$
which, along with the definition of $p_n$, gives 
 \begin{eqnarray}
  0\le \sum_{i=1}^\infty \alpha_i \le \sum_{i=1}^\infty 2\psi({p}_i)  = 2\psi(\bar{p}).\label{new-2007}
 \end{eqnarray}
 Let $v:=p-\bar{p}$ then $v\le p$ and
 $\psi(v)+\psi(\bar{p})=\psi(p)$, but $\psi(\bar{p})\ge 0$ giving
 $\psi(v)\le \psi(p)$. 
 If  $q\le v$ then $q\le p-\sum_{i=1}^n p_i$ making
 $\psi(q)\le \alpha_n$, for all $n\in\N$. This together with (\ref{new-2007}) and $g\ge |\psi(\bar{p})|$
gives that 
 $$n\psi(q) \le \sum_{i=1}^n \alpha_i\le 2\psi(\bar{p}) \le 2g,$$
 for all $n\in\N$. Since $E$ is Archimedean, this gives that $\psi(q)\le 0$. Hence $v$ is strongly negative with respect to $\psi$ and $v$ is as required by the theorem.
\qed

\begin{cor}\label{existence}
  If  $\psi(q)\not\ge 0$ for some $q\in{\cal B}$,
 then there exists $v\in {\cal B}$ with $v\le q$ such 
 that $v$ is strongly negative with respect to $\psi$ and $\psi(v)\le -\psi(q)^-$. 
\end{cor}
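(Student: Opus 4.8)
The plan is to reduce the corollary to Theorem \ref{strongly} by localising to the band on which $\psi(q)$ is strictly negative. First I would observe that the hypothesis $\psi(q)\not\ge 0$ is exactly $\psi(q)^->0$, since $\psi(q)^-\ge 0$ always holds and $\psi(q)\ge 0$ is equivalent to $\psi(q)^-=0$. The point of the corollary, as compared with Theorem \ref{strongly}, is that $\psi(q)$ itself need not be negative; we only know that its negative part does not vanish, so we must first cut down to where negativity is genuine.

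Next I would pass to the band generated by $\psi(q)^-$. Let $P$ denote the band projection in $E$ onto the band generated by $\psi(q)^-$ and set $g:=Pe$. As in the proof of Corollary \ref{cor-neg}, since $\psi(q)\in G$ we have $\psi(q)^-\in G$, and because $G$ is a Dedekind complete, order closed Riesz subspace containing $e$, the component $g=Pe$ lies in $G$; thus $g\in{\cal K}_G(e)\subset{\cal B}$. Setting $p:=gq$, the fact that ${\cal B}$ is a Boolean algebra (with multiplication of components agreeing with $\wedge$) gives $p\in{\cal B}$, and $p\le q$ since $g\le e$.

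I would then compute $\psi(p)$. By condition (i), $\psi(p)=\psi(gq)=g\psi(q)$, and since multiplication by the component $g=Pe$ acts as the band projection $P$, while the disjointness of $\psi(q)^+$ and $\psi(q)^-$ forces $P\psi(q)^+=0$ and $P\psi(q)^-=\psi(q)^-$, this yields
$$\psi(p)=P\psi(q)=-\psi(q)^-.$$
Because $\psi(q)^->0$ this shows $\psi(p)<0$, which is precisely the hypothesis required to apply Theorem \ref{strongly}.

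Finally I would invoke Theorem \ref{strongly} for this $p$: it produces $v\in{\cal B}$ with $v\le p$, with $v$ strongly negative with respect to $\psi$, and with $\psi(v)\le\psi(p)=-\psi(q)^-$. Since $v\le p\le q$, this $v$ satisfies all the conclusions demanded by the corollary. The only genuinely delicate points are the verification that $g\in{\cal K}_G(e)$, which rests on the order closedness of $G$ and mirrors the argument already used in Corollary \ref{cor-neg}, and the identity $g\psi(q)=-\psi(q)^-$; both are routine once the $f$-algebra/module identification of band projections with multiplication by components of $e$ is in hand, so the corollary follows essentially immediately from Theorem \ref{strongly}.
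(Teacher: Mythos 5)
Your proof is correct and takes essentially the same route as the paper's own proof: both localise to the band generated by $\psi(q)^-$ via the component $g=P_{\psi(q)^-}e\in{\cal K}_G(e)$, set $p:=gq\in{\cal B}$, compute $\psi(p)=g\psi(q)=-\psi(q)^-<0$, and then apply Theorem \ref{strongly} to $p$ to obtain the required $v\le p\le q$. The additional detail you supply (why $g\in{\cal K}_G(e)$ and why multiplication by $g$ acts as the band projection, giving $g\psi(q)=-\psi(q)^-$) simply makes explicit what the paper leaves implicit.
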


\proof
Let $g:=P_{(\psi(q))^-}e$, then $g\in {\cal K}_G(e)$.
 Let $p:=gq\in{\cal B}$ then $\psi(p)=g\psi(q)=-\psi(q)^-<0$. 
 Applying the previous theorem we have that there is
 $v\in{\cal B}$ with $v\le p=gq\le q$ so that $v$ is strongly negative with respect to $\psi$ and
 $\psi(v)\le\psi(p) =-\psi(q)^-$.
 \qed

We are now in a position to conclude the proof of the Hahn-Jordan decomposition of $\psi$ with respect to ${\cal B}$.

\begin{thm}\label{Hahn}{\bf Abstract Hahn-Jordan Decomposition}\\
 There exists  $q\in{\cal B}$ which is strongly positive with respect to $\psi$
 and which has $e-q$ strongly negative with respect to $\psi$.
\end{thm}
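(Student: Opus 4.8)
The plan is to mimic the classical measure-theoretic argument: I would locate a ``maximally negative'' element $\bar w\in{\cal B}$ and then show that its complement $q:=e-\bar w$ is forced to be strongly positive. Everything must be carried out with $G$-valued quantities, so the crucial technical point is to extract this maximal negative element without recourse to a countable exhausting sequence of negative sets.

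First I would record the stability of strong negativity under unions and the order-reversing behaviour of $\psi$ on negative sets. If $v_1,v_2\in{\cal B}$ are strongly negative, then writing $v_1\vee v_2=v_1+v_2(e-v_1)$ and decomposing any $p\le v_1\vee v_2$ as the disjoint sum $p v_1+p v_2(e-v_1)$ (both in ${\cal B}$, since ${\cal B}$ is a Boolean algebra), additivity (ii) gives $\psi(p)=\psi(p v_1)+\psi(p v_2(e-v_1))\le 0$; hence $v_1\vee v_2$ is strongly negative. The same decomposition applied to $w_1\le w_2$ (both strongly negative) yields $\psi(w_2)\le\psi(w_1)$, so the set $W$ of strongly negative elements of ${\cal B}$ is upward directed under $\le$ and $\psi$ is order-reversing along it. Using the order continuity (iii) for upward directed nets, an analogous multiplication-by-$p$ argument shows that the supremum of any increasing net of strongly negative elements is again strongly negative.

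Next I would form $\bar w:=\sup W$, which exists by Dedekind completeness, lies in ${\cal B}$ because ${\cal B}$ is order closed, and is strongly negative by the previous step. Since $\psi$ is order-reversing and order continuous along the increasing net $W\uparrow\bar w$, the value $\psi(\bar w)$ coincides with $\beta:=\inf\{\psi(w)\,|\,w\in W\}$, which exists by the order boundedness (iv) and Dedekind completeness; thus $\bar w$ realises the most negative possible value of $\psi$ among strongly negative elements. Setting $q:=e-\bar w$, I argue by contradiction: if $q$ were not strongly positive, then $\psi(p)\not\ge 0$ for some $p\le q$, and Corollary \ref{existence} would supply a strongly negative $v\le p\le e-\bar w$ with $\psi(v)\le-\psi(p)^-\lneq 0$. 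As $v$ is then disjoint from $\bar w$, the union $\bar w\vee v=\bar w+v$ is strongly negative with $\psi(\bar w\vee v)=\beta+\psi(v)\lneq\beta$, contradicting the minimality of $\beta$. Hence $q$ is strongly positive and $e-q=\bar w$ is strongly negative, as required.

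The main obstacle I anticipate is precisely this passage from the family of negative sets to the single maximal element $\bar w$: in a general Dedekind complete Riesz space the infimum $\beta$ need not be approached along a sequence, so the usual construction via an exhausting sequence of negative sets does not transfer directly. The directedness of $W$ under $\vee$, combined with order continuity of $\psi$ on upward directed nets and the order-closedness of ${\cal B}$, is exactly what lets me replace the sequence by the net $W$ itself and still conclude that $\bar w\in{\cal B}$ is strongly negative and attains $\beta$.
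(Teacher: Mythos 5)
Your proof is correct and follows essentially the same route as the paper's: both identify the greatest strongly negative element of ${\cal B}$ (your $\bar w$, the paper's maximal $q$) and then use Corollary \ref{existence} to show that if its complement failed to be strongly positive, a nonzero disjoint strongly negative piece could be adjoined, contradicting maximality. The only difference is in how the maximal element is produced: the paper invokes Zorn's lemma and then uses closure under pairwise suprema to get uniqueness of the maximal element, whereas you use the same two closure properties---pairwise suprema and suprema of upward directed nets---to conclude directly that $\sup W$ is itself strongly negative, a mild streamlining that also lets you bypass the paper's intermediate verification that $\psi(q)=\beta$.
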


\proof
 If $\psi(p)\ge 0$ for all $p\in{\cal B}$  take $q=e$ or (respectively) $q=0$, completing the proof for this case. 
 
 Assume that there exists $p\in{\cal B}$ with $\psi(p)\not\ge 0$.
Let
$${\cal H}:=\{ p\in{\cal B}\ |\ P\ \mbox{strongly negative w.r.t.}\ \psi \},$$
then $0\in{\cal H}$, so ${\cal H}$ is non-empty.
By the order continuity of $\psi$ and the order closedness of ${\cal B}$, ${\cal H}$ is order closed with respect to limits of upwards directed nets. Hence, by Zorn's lemma, ${\cal H}$ has maximal elements.
If $p_1, p_2 \in {\cal H}$ and $q\in {\cal B}$, then $p_1p_2q, (e-p_1)p_2q, (e-p_2)p_1q \in {\cal H}$ and thus $\psi(p_1p_2q)\le 0, \psi((e-p_1)p_2q)\le 0, \psi((e-p_2)p_1q)\le 0$, making
 $$\psi((p_1\vee p_2)q)=\psi(p_1p_2)q+\psi((e-p_1)p_2q)+\psi((e-p_2)p_1q)\le 0.$$
 Hence $p_1\vee p_2 \in{\cal H}$ and ${\cal H}$ is
 closed with respect to pairwise suprema. Thus the maximal element of ${\cal H}$ is unique.
 Denote this maximal element by $q$.
As there exists $p\in{\cal B}$ with $\psi(p)\not\ge 0$, Corollary 
\ref{existence} gives that ${\cal H}\ne \{0\}$.
Since $\psi$ is order bounded and $G$ is Dedekind complete, we can define 
 $$\beta:=\inf\{ \psi(p)\ |\ p\in{\cal H} \},$$
 and, again from Corollary \ref{existence}, $\beta<0$.
 
   We now show that $\psi(q)=\beta$.
     If this is not the case, then  $\psi(q)>\beta$ and there exists $p\in {\cal H}$ with
 $\psi(p)\not\ge \psi(q)$. Let $u:=P_{(\psi(p)-\psi(q))^-}e>0$, 
 then $u(\psi(p)-\psi(q))=-(\psi(p)-\psi(q))^-<0$ so  $u\psi(p)<u\psi(q)$.
 As $q$ is the maximal element of ${\cal H}$, $p\le q$ and $up+(e-u)q\le q$, further $u\in G$ gives that  $u$ commutes with $\psi$. Thus  
 \begin{eqnarray}
  \psi(up+(e-u)q)=u\psi(p)+(e-u)\psi(q)<u\psi(q)+(e-u)\psi(q)=\psi(q).\label{1of2}
 \end{eqnarray}
 But $u(q-p)\in{\cal B}$ with $u(q-p)\le q$ and $q$ is strongly negative with respect to $\psi$, so
 $\psi(u(q-p))\le 0$. Hence, by (\ref{1of2}),
 \begin{eqnarray*}
  \psi(q)=\psi(u(q-p))+\psi((up+(e-u)q)\le\psi((up+(e-u)q)<\psi(q), 
 \end{eqnarray*}
 a contradiction.
 Thus $\psi(q)=\beta$.

 It remains only to show that $e-q$ is strongly positive with respect to $\psi$. Suppose that this is not the case.
 Then there exists $p\in {\cal B}$ with $p\le e-q$ and $\psi(p)\not\ge 0$.
 Corollary~\ref{existence} now gives that there exists $m\in{\cal B}$ with $m\le p$ strongly negative with respect to $\psi$.
 Thus $mq=0$ and $q\vee m=q+m>q$ is strongly negative, contradicting the maximality of $q$.
 Hence $e-q$ is positive.
\qed

The Hahn-Jordan decomposition of  \cite[Theorem 3.5]{watson} follows from Theorem \ref{Hahn}
by taking:  
$E$ a Dedekind complete Riesz space with weak order unit;
$T$ a Riesz space conditional expectation operator on $E$ with $Te=e$, where $e$ is some chosen weak order unit; 
 $E$ to be $T$-universally complete;
$G=R(T)$;
$F$ an order closed Riesz subspace of $E$ with $R(T)\subset F$;
${\cal B}$ the set of all components of $e$;
and $\psi_f(p):=T(pf)$ for $p\in {\cal B}$ and $f\in E$.

\newsection{A Riesz-Frechet representation theorem}\label{RF-sec}

The Hahn-Jordan decomposition of  Theorem \ref{Hahn} is used in Lemma \ref{hj-dual-lem} which provides a cruxial step for the development of the Riesz-Frechet representation theorem, Theorem~\ref{R-F}.

Let ${\mathcal{B}}$ denote the lattice of band projections on $L^2(T)$.
For brevity of notation, if $f\in E^+$ then $P_f$ will denote the band projection onto the band generated by $f$ in $E$ and $p_f:=P_fe$ where $e$ is the chosen weak order unit of $E$. Here $p_f$ is a component of $e$ in $E$. Further, due to $E$ being an $E_e$-module,  and the definition of the multiplicative structure,  $P_fg=p_fg$ where on the left is the action of the band projection $P_f$ on $g$ and on the right is the product of $p_f$ and $g$, for $g\in E$.

\begin{prop}\label{dual-map}
For $y\in E:=L^2(T)$ let 
$T_y(x):=T(xy)$ for all $x\in E$, then $T_y\in \hat{E}$ and 
\begin{equation}\label{T-operator-norm}
\|T_y\| =\|y\|_{T,2}.
\end{equation}
 The map $\Psi:y\mapsto T_y$ is $R(T)$-homogeneous, additive and injective.
\end{prop}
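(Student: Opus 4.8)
The plan is to verify the three structural claims about $T_y$ and $\Psi$ in turn, leaving the norm identity as the substantive part. First, that $T_y\in\hat{E}$: the map $x\mapsto T(xy)$ is additive since $T$ is linear and multiplication in the $f$-algebra $E_u$ is additive in each argument; it is $R(T)$-homogeneous because, for $f\in R(T)$, the averaging property $T(f\cdot xy)=fT(xy)$ (from \cite[Theorem 5.3]{KLW-exp}) gives $T_y(fx)=fT_y(x)$; and it is order continuous because $T$ is order continuous and multiplication by a fixed element is order continuous on the ideal $E_e$ (extended to $L^2(T)$). Thus $T_y\in E^*$. For $T$-strong boundedness, I would apply the Hölder inequality (\ref{holder}) directly: $|T_y(x)|=|T(xy)|\le T|xy|\le\|x\|_{T,2}\|y\|_{T,2}$, so $T_y$ is $T$-strongly bounded with the candidate bound $k=\|y\|_{T,2}$. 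This already yields the inequality $\|T_y\|\le\|y\|_{T,2}$ by definition of $\|\cdot\|$ as an infimum.

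The reverse inequality $\|T_y\|\ge\|y\|_{T,2}$ is the crux. The natural test element is $x=y$ itself, giving $T_y(y)=T(y^2)=\|y\|_{T,2}^2$, and combining with (\ref{norm-bound}) yields $\|y\|_{T,2}^2=|T_y(y)|\le\|T_y\|\,\|y\|_{T,2}$. The temptation is to ``divide by $\|y\|_{T,2}$'', but $\|y\|_{T,2}\in R(T)^+$ is an element of a Riesz space, not a positive real, so it need not be invertible — this is exactly where the partial-inverse machinery of the Appendix must enter. On the band where $\|y\|_{T,2}>0$ I would multiply through by the partial inverse of $\|y\|_{T,2}$ to conclude $\|y\|_{T,2}\le\|T_y\|$ there; on the complementary band $\|y\|_{T,2}=0$, so $T(y^2)=0$ and strict positivity of $T$ forces $y=0$ on that band, whence $\|y\|_{T,2}=0\le\|T_y\|$ trivially. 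Patching these two bands together via the band projection $P_{\|y\|_{T,2}}$ gives $\|y\|_{T,2}\le\|T_y\|$ throughout, and with the first inequality establishes (\ref{T-operator-norm}).

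For the final claims: $R(T)$-homogeneity and additivity of $\Psi$ follow from the corresponding bilinearity of $(x,y)\mapsto T(xy)$ in the $y$ slot, using the averaging property for homogeneity exactly as above. Injectivity is then immediate from the norm identity just proved: if $\Psi(y_1)=\Psi(y_2)$ then $T_{y_1-y_2}=\Psi(y_1)-\Psi(y_2)=0$ by additivity, so $\|y_1-y_2\|_{T,2}=\|T_{y_1-y_2}\|=0$, and strict positivity of $T$ (which makes $\|\cdot\|_{T,2}$ a genuine $R(T)$-valued norm separating points) gives $y_1=y_2$.

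I expect the main obstacle to be the reverse norm inequality, specifically the book-keeping needed to divide by the $R(T)$-valued quantity $\|y\|_{T,2}$ without assuming invertibility; the band decomposition together with the Appendix's partial inverse is the device that resolves this, and care must be taken that the partial inverse interacts correctly with the band projection $P_{\|y\|_{T,2}}$ and with the averaging property of $T$.
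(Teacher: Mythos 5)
Your proposal is correct and takes essentially the same route as the paper's proof: Hölder's inequality (\ref{holder}) gives $T$-strong boundedness and $\|T_y\|\le\|y\|_{T,2}$, then testing against $y$ itself and multiplying by the canonical partial inverse $\beta$ of $\|y\|_{T,2}$ yields the reverse inequality (the paper's chain is $\|T_y\|\ge P_{\|y\|_{T,2}}\|T_y\|=\beta\|y\|_{T,2}\|T_y\|\ge\beta\|y\|_{T,2}^2=\|y\|_{T,2}$), with injectivity from strict positivity of $T$ together with (\ref{T-operator-norm}). The only cosmetic difference is that your explicit patching over the complementary band (and the appeal to strict positivity there) is superfluous, since $\|y\|_{T,2}$ lies in its own band and $\|T_y\|\ge 0$, so the single multiplication by $\beta$ already gives the inequality globally.
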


\begin{proof}
$T$ is linear and from the averaging property of $T$ in ${{L}}^2(T)$, see \cite{KLW-exp}, we have that $T_y$ is $R(T)$-homogeneous.  The order continuity of $T$ and of multiplication in $L^2(T)$ give that $T_y$ is in
$E^*$.  That $T_y$ is in $\hat{E}$ and that $\|T_y\| \le\|y\|_{T,2}$ follow directly from (\ref{holder}). However,
\begin{equation}\label{bound-2022}
\|T_y\|\|y\|_{T,2}\ge |T_y(y)|=\|y\|_{T,2}\|y\|_{T,2}.
\end{equation}
Let $\beta$ be the canonical partial inverse of  $\|y\|_{T,2}$ in $R(T)$, see the Appendix, we can multiply 
(\ref{bound-2022}) by $\beta$ to obtain
\begin{eqnarray*}
\|T_y\|&\ge& P_{\|y\|_{T,2}}\|T_y\|\\
&=&\beta\|y\|_{T,2}\|T_y\|\\
&\ge& \beta |T_y(y)|\\
&=&\beta \|y\|_{T,2}^2\\
&=&P_{\|y\|_{T,2}}\|y\|_{T,2}\\
&=&\|y\|_{T,2}.
\end{eqnarray*}
Thus (\ref{T-operator-norm}) holds.

That the map $y\mapsto T_y$ is linear is straight forward, $R(T)$-homogeneity follows from $T$ being an averaging operator, while injectivity follows from the strict positivity of $T$ and (\ref{T-operator-norm}).
\qed
\end{proof}

As a direct application of Proposition \ref{dual-map},  $T=T_e\in E^*$.

The following Lemma, critical to the proof of the Riesz-Frechet representation theorem, relies on 
 the Abstract Hahn-Jordan Decomposition, Theorem \ref{Hahn}.

\begin{lem}\label{hj-dual-lem}
For each ${\frak g}\in \hat{E}$, there is a component $q_{\frak g}^+$ of $e$ in $E$ so that  ${\frak g}(pq_g^+)\ge 0$ and ${\frak g}((e-q_g^+)p)\le 0$ for all components $p$ of $e$ in $E$.
\end{lem}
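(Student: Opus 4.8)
The plan is to recognise this Lemma as a direct instance of the Abstract Hahn-Jordan Decomposition, Theorem \ref{Hahn}. Concretely, I would instantiate that theorem with $E = L^2(T)$, $G = R(T)$, $\mathcal{B} = \mathcal{K}(e)$ the Boolean algebra of all components of $e$ in $E$, and the map $\psi : \mathcal{B} \to R(T)$ defined by $\psi(p) := {\frak g}(p)$. This is well defined: a component $p$ satisfies $p^2 = p \le e$, so $p \in L^2(T)$, and ${\frak g}$ takes values in $R(T) = G$. Everything then reduces to checking the four hypotheses (i)--(iv) on $\psi$, after which Theorem \ref{Hahn} hands back the component we want.

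Conditions (i)--(iii) I expect to fall out immediately from the definition of a $T$-strongly bounded $T$-linear functional. For $p \in \mathcal{B}$ and $r \in \mathcal{K}_G(e) \subset R(T)$, the product $pr$ is the action of $r \in R(T)$ on $p$, so $R(T)$-homogeneity of ${\frak g}$ gives $\psi(pr) = {\frak g}(rp) = r{\frak g}(p) = r\psi(p)$, which is (i); additivity of ${\frak g}$ on disjoint components, where $p \vee r = p + r$, gives (ii); and order continuity of ${\frak g}$ gives (iii) at once. The one hypothesis I would single out is the order boundedness (iv). Here I would use that a component $p$ has $p^2 = p$, so $\|p\|_{T,2} = (T(p^2))^{1/2} = (Tp)^{1/2}$; since $0 \le p \le e$ and $T$ is positive with $Te = e$, this gives $0 \le Tp \le e$ and hence $\|p\|_{T,2} \le e$. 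Combined with the $T$-strong bound (\ref{norm-bound}),
\begin{equation*}
|\psi(p)| = |{\frak g}(p)| \le \|{\frak g}\|\,\|p\|_{T,2} \le \|{\frak g}\|
\end{equation*}
for all $p \in \mathcal{B}$, so (iv) holds with the uniform order bound $g := \|{\frak g}\| \in E^+$.

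Once (i)--(iv) are in place, Theorem \ref{Hahn} yields $q \in \mathcal{B}$ strongly positive with respect to $\psi$ and with $e - q$ strongly negative with respect to $\psi$, and I would simply set $q_{\frak g}^+ := q$. To conclude, for an arbitrary component $p$ the product $p q_{\frak g}^+ = p \wedge q_{\frak g}^+$ is again a component lying below $q_{\frak g}^+$, so strong positivity gives ${\frak g}(p q_{\frak g}^+) = \psi(p q_{\frak g}^+) \ge 0$; symmetrically $(e - q_{\frak g}^+)p \le e - q_{\frak g}^+$ is a component, so strong negativity gives ${\frak g}((e - q_{\frak g}^+)p) \le 0$. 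Since the whole argument is a matter of checking the hypotheses of an already-proved theorem, I do not anticipate a genuine obstacle; the only delicate point is hypothesis (iv), where the observation $\|p\|_{T,2} \le e$ for components $p$ lets the $R(T)$-valued norm $\|{\frak g}\|$ itself serve as the required order bound.
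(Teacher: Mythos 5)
Your proposal is correct and takes essentially the same approach as the paper's own proof: instantiate Theorem \ref{Hahn} with $G=R(T)$, $\mathcal{B}=\mathcal{K}(e)$ and $\psi(p)={\frak g}(p)$, the decisive point being the order bound $|\psi(p)|\le\|{\frak g}\|$ coming from $\|p\|_{T,2}\le e$ for components $p$. Your write-up is in fact more detailed than the paper's, which records only the boundedness hypothesis (iv) and leaves the verification of (i)--(iii) and the final deduction implicit.
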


\begin{proof}
We take $G=R(T)$,
$\psi(p)={\frak g}(p)$ and ${\cal B}$ to be the set of all components of $e$ in $E$. Now, since $|\psi(p)|\le \|{\frak g}\|$ for all $p\in{\cal B}$, Theorem \ref{Hahn} is applicable and gives that there is $q\in{\cal B}$ which is strongly positive with respect to $\psi$ and has $e-q$ strongly negative with respect to $\psi$. Take $q_{\frak g}^+:=q$. 
\qed
\end{proof}

\begin{thm}[Riesz-Frechet representation theorem in Riesz space]\label{R-F}
For each ${\frak f}\in \hat{E}$ there exists $y({\frak f})\in E:=L^2(T)$ such that 
${\frak f}=T_{y({\frak f})}$. 
\end{thm}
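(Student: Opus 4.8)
The plan is to reduce to a positive functional and then build the representing element by a Freudenthal-type spectral (``layer-cake'') construction driven by the Hahn--Jordan decomposition of Lemma \ref{hj-dual-lem}. First I would apply Lemma \ref{hj-dual-lem} to ${\frak f}$ itself, obtaining the component $q^+$, and split ${\frak f}={\frak f}_+-{\frak f}_-$ where ${\frak f}_+(x):={\frak f}(q^+x)$ and ${\frak f}_-(x):=-{\frak f}((e-q^+)x)$. Each of these is again additive, $R(T)$-homogeneous, order continuous and $T$-strongly bounded, since multiplication by a component is a band projection and hence does not increase $\|\cdot\|_{T,2}$; and each is positive, being nonnegative on components (by Lemma \ref{hj-dual-lem}) and therefore, by order continuity together with the Freudenthal approximation of $E^+$ by $R(T)^+$-combinations of components, nonnegative on all of $E^+$. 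As $T_{y_1}-T_{y_2}=T_{y_1-y_2}$, it then suffices to represent a positive ${\frak f}\in\hat E$.

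Fix such a positive ${\frak f}$. For each scalar $\lambda>0$ the functional ${\frak f}-\lambda T={\frak f}-\lambda T_e$ again lies in $\hat E$, so Lemma \ref{hj-dual-lem} yields a component $q_\lambda$ with ${\frak f}(pq_\lambda)\ge\lambda T(pq_\lambda)$ and ${\frak f}(p(e-q_\lambda))\le\lambda T(p(e-q_\lambda))$ for every component $p$. For $\lambda<\mu$, testing the component $r:=q_\mu(e-q_\lambda)$ against both inequalities gives $(\mu-\lambda)T(r)\le 0$, so strict positivity of $T$ forces $r=0$ and hence $q_\mu\le q_\lambda$; thus $(q_\lambda)$ is decreasing. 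Putting $p=q_\lambda$ gives $\lambda T(q_\lambda)\le{\frak f}(q_\lambda)$, while (\ref{norm-bound}) gives ${\frak f}(q_\lambda)\le\|{\frak f}\|\,(T q_\lambda)^{1/2}$; comparing forces $(Tq_\lambda)^{1/2}\le\|{\frak f}\|/\lambda$, so $Tq_\lambda\to 0$ and, by strict positivity, $q_\lambda\downarrow 0$ as $\lambda\to\infty$. Setting $q_0:=\sup_{\lambda>0}q_\lambda\in{\cal B}$, positivity of ${\frak f}$ together with ${\frak f}(p)\le\lambda T(p)$ on $e-q_0$ shows that ${\frak f}$ vanishes on $e-q_0$.

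Next, for $N\in\N$ I set $y_N:=\int_0^N\lambda\,d(e-q_\lambda)$, interpreted as the relatively uniform limit of the Riemann--Stieltjes sums $\sum_i\lambda_i(q_{\lambda_{i-1}}-q_{\lambda_i})$ over partitions $0=\lambda_0<\cdots<\lambda_n=N$; these sums are Cauchy in $E_e$ because upper minus lower sum is at most (mesh)$\cdot e$, so $y_N\in E_e\subset L^2(T)$ and $0\le y_N\uparrow$. On each layer $L_i=q_{\lambda_{i-1}}-q_{\lambda_i}$ the defining inequalities give $\lambda_{i-1}T(pL_i)\le{\frak f}(pL_i)\le\lambda_iT(pL_i)$ for components $p$; summing, using $\sum_iL_i=q_0-q_N$, and letting the mesh tend to $0$ (order continuity of $T$ and of multiplication) yields the identity $T(py_N)={\frak f}(pq_0)-{\frak f}(pq_N)$ for all components $p$. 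In particular, since ${\frak f}(pq_N)\ge 0$, one has $T(py_N)\le{\frak f}(p)$.

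The crux, and the step I expect to be hardest, is to show that $y_N$ converges in $L^2(T)$ and not merely in $L^1(T)$; this is exactly where $T$-strong boundedness of ${\frak f}$, rather than mere order boundedness, is indispensable. Approximating $y_N$ from below by simple combinations of components and using $T(pL_i)\le{\frak f}(L_i)$ gives $T(y_N^2)\le{\frak f}(y_N)$, while (\ref{norm-bound}) gives ${\frak f}(y_N)\le\|{\frak f}\|\,(T(y_N^2))^{1/2}$; comparing yields $(T(y_N^2))^{1/2}\le\|{\frak f}\|$, so $T(y_N^2)\le\|{\frak f}\|^2$ for all $N$. As $(y_N^2)$ is increasing with $(T(y_N^2))$ order bounded, $T$-universal completeness of $E$ gives that $y_N^2$ is order convergent in $E$, whence $y_N\uparrow y$ with $y^2\in L^1(T)$, i.e. $y\in L^2(T)=E$. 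Letting $N\to\infty$ in $T(py_N)={\frak f}(pq_0)-{\frak f}(pq_N)$, and using $q_N\downarrow 0$, order continuity of $T$ and of ${\frak f}$, and ${\frak f}(pq_0)={\frak f}(p)$, gives $T(py)={\frak f}(p)$ for every component $p$. Since ${\frak f}$ and $T_y$ are both additive, $R(T)$-homogeneous and order continuous and every element of $L^2(T)$ is an order limit of $R(T)$-combinations of components, they agree on all of $E$; thus ${\frak f}=T_{y({\frak f})}$ with $y({\frak f}):=y$. Reassembling the two positive parts gives the representation for arbitrary ${\frak f}\in\hat E$, with the bound $\|y\|_{T,2}\le\|{\frak f}\|$ already visible above and matching the norm identity of Proposition \ref{dual-map}.
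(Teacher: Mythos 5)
Your proof is correct, and its engine is the same as the paper's: both apply Lemma \ref{hj-dual-lem} to the family ${\frak f}-cT$ for positive constants $c$, assemble the representing element as a layer-cake sum of the resulting Hahn--Jordan components, obtain the key square bound $T(y_N^2)\le\|{\frak f}\|^2$ by the Cauchy--Schwarz/partial-inverse bootstrap, get order convergence from $T$-universal completeness, and pass from components to all of $E$ via Freudenthal's theorem. The architecture differs, though. The paper does not split ${\frak f}$ into positive and negative parts first; it works with dyadic levels $k/2^n$, forms the layers $h^n_k=q^+_{{\frak f}-2^{-n}kT}(e-q^+_{{\frak f}-2^{-n}(k+1)T})$ and the sums $s_n=\sum_k\frac{k}{2^n}h^n_k\uparrow s$, then runs the whole argument again on $-{\frak f}$ and recombines at the end through ${\frak f}(g)={\frak f}(g(q^++q^-))-{\frak f}(gq^+q^-)+{\frak f}(g(e-q^+)(e-q^-))$. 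You instead reduce to a positive functional at the outset and replace the dyadic sums by a Riemann--Stieltjes integral $\int_0^N\lambda\,d(e-q_\lambda)$ over a continuum of levels; this is equivalent in substance, trading the paper's $q^+,q^-$ bookkeeping for your positivity reduction. One point where your write-up is genuinely better: you prove the nesting $q_\mu\le q_\lambda$ for $\lambda<\mu$ from strict positivity of $T$, whereas the paper uses this fact tacitly --- its assertion that $h^n_kh^n_j=0$ for $k\ne j$ requires, when $|j-k|\ge 2$, exactly this monotonicity of the (non-unique) Hahn--Jordan components, and it is nowhere proved there. Two small slips to fix: the universal completeness you invoke belongs to $L^1(T)$, not to $E=L^2(T)$; and when passing to the limit in $T(py_N)={\frak f}(pq_0)-{\frak f}(pq_N)$ you should say explicitly that ${\frak f}(pq_N)\to 0$ because $q_N\downarrow 0$ (your tail estimate $Tq_N\le\|{\frak f}\|^2/N^2$ plus strict positivity of $T$) together with the order continuity of ${\frak f}$.
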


\begin{proof} 
For each ${\frak g}\in \hat{E}$ take $q_{\frak g}^+$ as in Lemma \ref{hj-dual-lem}.

Let ${\frak f}\in\hat{E}$ and note
that $T\in \hat{E}$ making ${\frak f}-\frac{k}{2^n}T\in \hat{E}$ for $n\in\N$ and $k\in\N_0:=\{0\}\cup\N$.
From the definition of $q_g^+$,
$$\left({\frak f}-\frac{k}{2^n}T\right)(p)\ge 0$$
for all $p\in \mathcal{B}$ with $p\le q^+_{{\frak f}-2^{-n}kT}$.
In particular
$${\frak f}(p)\ge 2^{-n}kT(p),$$
for all $p\in \mathcal{B}$ with $p\le q^+_{{\frak f}-2^{-n}kT}$. 

Again
$$\left({\frak f}-\frac{k+1}{2^n}T\right)(p)\le 0$$
for all $p\in \mathcal{B}$ with $p\le e-q^+_{{\frak f}-2^{-n}(k+1)T}$.

Let 
$$h_k^n=q^+_{{\frak f}-2^{-n}kT}(e-q^+_{{\frak f}-2^{-n}(k+1)T})$$
then $h_k^nh_j^n=0$ for all $k\ne j$ so 
$$\sum_{k=0}^\infty  h_k^n =:q^+$$
is a component of $e$.
These limits exist in $E_u$,  due to the disjointness of the summands, and as they are bounded by $e$ they are also in $E=L^2(T)$.
Further, for all $p\in{\cal B}$,
\begin{eqnarray}\label{july-2022}
\frac{k}{2^n}T(ph_k^n)\le {\frak f}(ph_k^n)\le  \frac{k+1}{2^n}Tph_k^n,
\end{eqnarray}
i.e.,
$$0\le {\frak f}(ph_k^n)-\frac{k}{2^n}T(ph_k^n)\le  \frac{1}{2^n}Tph_k^n.$$

Note that ${\frak f}(q^+p)\ge  0$ and ${\frak f}((e-q^+)p)\le 0$ for all $p\in {\cal B}$.

Now ${\frak f}(q^+p)\le \|{\frak f}\|\in R(T)$ for all $p\in {\cal B}$ and as $R(T)$ is an $f$-algebra contained in $E=L^2(T)$ we have $({\frak f}(q^+p))^2\le \|{\frak f}\|^2\in R(T)$.
The sums
$$\sum_{k=0}^\infty \frac{k}{2^n} h_k^n=:s_n$$
and
$$\sum_{k=0}^\infty \frac{k^2}{2^{2n}} h_k^n=s_n^2,$$
exist in $E_u$ due to disjointness of the summands. Here we have also used that $(h_k^n)^2=h_k^n$.
We note that $s_n$ exists in $L^1(T)$ since 
$$T\left(\sum_{k=0}^N \frac{k}{2^n} h_k^n\right)
=\sum_{k=0}^N \frac{k}{2^n} Th_k^n
\le \sum_{k=0}^N  {\frak f}(h_k^n)
\le {\frak f}(q^+)$$
for all $N\in \N$, and hence $Ts_n\le {\frak f}(q^+)$ for all $n\in \N$.
The sequence $(s_n)$ is increasing in $L^1(T)$ and as noted above $Ts_n\le {\frak f}(q^+)$
for all $n\in \N$ so, by the $T$-universal completeness of $L^1(T)$, $(s_n)$ converges in order to some $s$ in $L^1(T)$.

Working in $E_u^+$, from (\ref{norm-bound}) and (\ref{july-2022}), we have that
\begin{eqnarray*}
 \left(\sum_{k=0}^N\frac{k^2}{2^{2n}} Th_k^n\right)^2
 &\le&\left(\sum_{k=0}^N \frac{k}{2^{n}} {\frak f}(h_k^n)\right)^2\\
 &=&\left({\frak f}\left(\sum_{k=0}^N \frac{k}{2^{n}} h_k^n\right)\right)^2\\
 &\le&\|{\frak f}\|^2  T\left(\left(\sum_{k=0}^N \frac{k}{2^{n}} h_k^n\right)^2\right)\\
 &=&\|{\frak f}\|^2  T\left(\sum_{k=0}^N \frac{k^2}{2^{2n}} h_k^n\right)\\
 &=&\|{\frak f}\|^2  \left(\sum_{k=0}^N \frac{k^2}{2^{2n}} Th_k^n\right)
 \end{eqnarray*}
 Multiplying the above by the partial inverse of $\displaystyle{\sum_{k=0}^N \frac{k^2}{2^{2n}} Th_k^n}$, we obtain
 \begin{eqnarray*}
 T\left(\sum_{k=0}^N \frac{k^2}{2^{2n}} h_k^n\right)&\le& \|{\frak f}\|^2\in R(T)^+  
 \end{eqnarray*}
 for all $N\in\N$. Thus, by the $T$-universal completeness of $L^1(T)$,
 \begin{eqnarray*}
  s_n^2=\sum_{k=0}^\infty \frac{k^2}{2^{2n}} h_k^n\in L^1(T)   
 \end{eqnarray*}
 which gives that $s_n\in L^2(T)$.
 From the above $T(s_n^2)\le \|{\frak f}\|^2$ for all $n\in\N$ and $(s_n^2)$ increases in order to $s^2$ Thus, from the 
 $T$-universal completeness of $L^1(T)$, $s^2\in L^1(T)$, making $s\in L^2(T)$.
Working from (\ref{july-2022}),
\begin{eqnarray}\label{august-2022}
T(ps_n)=\sum_{k=0}^\infty\frac{k}{2^n}T(ph_k^n)\le {\frak f}(pq^+)\le  \sum_{k=0}^\infty\frac{k+1}{2^n}Tph_k^n\le T(ps_n)+\frac{1}{2^n} T(pq_{{\frak f}}^+),
\end{eqnarray}
Taking the order limit as $n\to\infty$ in (\ref{august-2022}) gives
\begin{eqnarray}\label{august-2022-1}
T(ps)={\frak f}(pq^+).
\end{eqnarray}
Applying Freudenthal's theorem along with the order continuity and linearity of $T$ and ${\frak f}$ to (\ref{august-2022-1}) we have that
\begin{eqnarray}\label{august-2022-2}
T(gs)={\frak f}(gq^+),
\end{eqnarray}
for all $g\in E^+$. 
This extends by linearity to all $g\in E$.
We note here that $s$ is in the band generated by $q^+$. 

Applying the above to $-{\frak f}$ we have that there is $\sigma\in E^+$ 
and $q^-\in{\cal B}$ so that:
$-{\frak f}(q^-p)\ge  0$ and $-{\frak f}((e-q^-)p)\le 0$ for all $p\in {\cal B}$;
$\sigma$ is in the band generated by $q^-$; and 
\begin{eqnarray}\label{august-2022-3}
T(g\sigma)=-{\frak f}(gq^-)),
\end{eqnarray}
for all $g\in E$. 

Let $y({\frak f}):=s-\sigma$, then $y({\frak f})\in L^2(T)=E$ and 
$$T_{y({\frak f})}(g)=T(gy({\frak f}))=T(gs)-T(g\sigma)={\frak f}(g(q^++q^-)),$$
for all $g\in E$.
Now
$${\frak f}(g)={\frak f}(g(q^++q^-))-{\frak f}(gq^+q^-)+{\frak f}(g(e-q^+)(e-q^-)).$$
Here
$${\frak f}(pq^+q^-)=0={\frak f}(p(e-q^+)(e-q^-))$$
for all $p\in{\cal B}$, and thus by Freudenthal's theorem 
$${\frak f}(gq^+q^-)=0={\frak f}(g(e-q^+)(e-q^-))$$
for all $g\in E$.
Hence
${\frak f}(g)=T_{y({\frak f})}(g)$ for all  $g\in E$.
\qed
\end{proof}

Combining Proposition \ref{dual-map} and Theorem \ref{R-F} gives Theorem \ref{thm-final}.

 \section{Appendix - Partial Inverses}
 If $F$ is a universally complete Riesz space with weak order unit, say $e$, then $F$ is an $f$-algebra and $e$ can be taken as the algebraic unit, see \cite[Theorem 3.6]{V-E}.
 
\begin{defs}
Let $F$ be a  universally complete Riesz space with weak order unit, say $e$, and take $e$ as the algebraic unit of  the associated $f$-algebra structure. 
We say that $g\in F$ has a partial inverse if there exists $h\in F$ such that $gh=hg=P_{|g|}e$ where $P_{|g|}$ denotes the band projection onto the band generated by $|g|$. 
We refer to $h$ as the canonical partial inverse of $g$ if, in addition, to being a partial inverse to $g$, we have that $(I-P_{|g|})h=0$, i.e.,  $h\in {\cal B}_{|g|}$,  where ${\cal B}_{|g|}$ is the band generated by $|g|$.
\end{defs}

The following result gives existence, uniqueness and positivity results concerning partial inverses and canonical partial inverses. 
We denote by ${\cal B}_f$ the band generated by $f$ and by $P_f$ the band projection onto ${\cal B}_f$.

 \begin{thm}
Let $F$ be a  universally complete Riesz space with weak order unit, say $e$, which also take as the algebraic unit of  the associated $f$-algebra structure.  Each $g\in F$ has a partial inverse $h\in F$. 
The canonical partial inverse of $g$ is unique and in this case $g$ is also the canonical partial inverse of $h$. If $g\in F^+$ then so is its canonical partial inverse.
  \end{thm}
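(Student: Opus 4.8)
The plan is to construct the partial inverse explicitly using the $f$-algebra structure together with Freudenthal's spectral theorem, which is available since $F$ is universally complete with weak order unit $e$ serving as the algebraic unit. First I would reduce to the case $g \in F^+$: given arbitrary $g \in F$, note $|g|$ has the same generated band as $g$, and if I can invert $|g|$ on its band it should be routine to recover a partial inverse of $g$ by adjusting signs via the band projections onto the positive and negative parts. So the core task is: for $g \in F^+$, produce $h \in F^+$ with $gh = P_g e$ and $(I - P_g)h = 0$.

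For existence when $g \in F^+$, the key idea is to approximate $g$ from below by simple (finite linear combinations of components of $e$) elements via Freudenthal, invert each piece on the band where it is bounded away from zero, and pass to a supremum. Concretely, for each $n \in \N$ let $p_n := P_{(g - \frac{1}{n}e)^+} e$ be the component of $e$ onto the band where $g \ge \frac1n e$; on this band $g$ is bounded below, so in the $f$-algebra $F$ one can form an inverse-like element $h_n \ge 0$ supported on the band ${\cal B}_{p_n}$ with $g h_n = p_n$. The $p_n$ increase to $P_g e$ (the component of $e$ onto ${\cal B}_g$), and the $h_n$ form an increasing net in $F^+$; universal completeness guarantees the supremum $h := \sup_n h_n$ exists in $F$. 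Order continuity of multiplication in the $f$-algebra then gives $gh = \sup_n g h_n = \sup_n p_n = P_g e$, and by construction $h$ lives in ${\cal B}_g$, so $(I - P_g)h = 0$. This yields the canonical partial inverse, and it is positive by construction, covering the last sentence of the theorem at the same time.

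For uniqueness of the canonical partial inverse, suppose $h, h'$ both satisfy $gh = gh' = P_{|g|}e$ and both lie in ${\cal B}_{|g|}$. Multiplying $g(h - h') = 0$ by $h$ and using $hg = P_{|g|}e$ gives $P_{|g|}e \cdot (h - h') = h - h'$ (since $h - h' \in {\cal B}_{|g|}$ means $P_{|g|}$ acts as the identity on it), while associativity forces $h(g(h-h')) = 0$; combining these yields $h - h' = 0$. The symmetric statement, that $g$ is the canonical partial inverse of $h$, follows by checking that $g$ lies in ${\cal B}_{|h|}$ (the bands generated by $g$ and its canonical inverse coincide, as each is supported exactly where the other is nonzero) and that $hg = gh = P_{|g|}e = P_{|h|}e$.

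The main obstacle I anticipate is making the band-localized inversion of $g$ genuinely rigorous inside the $f$-algebra: one must verify that the ``inverse on the band ${\cal B}_{p_n}$'' is a well-defined element of $F$ and that the passage to the supremum commutes with multiplication. This is where Freudenthal's theorem and the order continuity of $f$-algebra multiplication (valid in the universally complete setting) do the real work, and the bookkeeping of which band each element is supported on needs care so that the identities $gh_n = p_n$ and $(I-P_g)h = 0$ are both exact rather than merely approximate.
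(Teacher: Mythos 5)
Your overall route is sound and, at bottom, is the same kind of argument the paper uses: reduce to $g\in F^+$, build the inverse by a Freudenthal-type spectral decomposition plus order limits, and get uniqueness by the multiplication trick. Your uniqueness computation ($h-h' = (P_{|g|}e)(h-h') = (hg)(h-h') = h\bigl(g(h-h')\bigr) = 0$) is exactly the paper's computation $p = pP_{|f|}e = pfq = P_{|f|}q = q$, and your reduction to the positive case via the sign component $P_{g^+}e - P_{g^-}e$ is a harmless variant of the paper's, which inverts $g^+$ and $g^-$ separately and sets $h = h^+ - h^-$. Where you differ in organization: the paper constructs explicit dyadic upper and lower approximants $\bar{h}_n,\underline{h}_n$ of the inverse from the spectral components of $f$ (with a correction term $\gamma_n$ handling the part of the band where $f$ is small) and squeezes them together, whereas you localize to the bands $p_n := P_{(g-\frac{1}{n}e)^+}e$ where $g$ is bounded away from zero, invert there, and take an increasing limit.

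The gap is the sentence ``on this band $g$ is bounded below, so \dots one can form an inverse-like element $h_n$ with $gh_n = p_n$.'' That assertion is the entire nontrivial content of the theorem, and you do not prove it; worse, the obvious elementary justification does not apply as stated, because on ${\cal B}_{p_n}$ the element $g$ is bounded below but need \emph{not} be bounded above, so no Neumann series converges relatively uniformly to an inverse there. To make this step rigorous you must decompose further, e.g.\ pass to the components of $p_n$ on which additionally $g\le m e$ (there $\frac{1}{n}p \le g \le mp$, and invertibility in that band follows from the Neumann series and uniform completeness), check that these local inverses are compatible, and assemble them across the disjoint pieces using lateral completeness --- which is precisely the dyadic bookkeeping that constitutes the paper's construction. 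Two smaller deferred points of the same kind: the existence of $\sup_n h_n$ is not automatic from universal completeness (increasing sequences in universally complete spaces can fail to have suprema); it holds here because the increments $h_{n+1}-h_n$ are pairwise disjoint, being supported on the bands generated by $p_{n+1}-p_n$, so lateral completeness applies. Likewise, that $(h_n)$ is increasing needs the small verification $h_{n+1}p_n = h_n$ (same multiplication trick as uniqueness). None of this is fatal --- the strategy does work, and your treatment of uniqueness and of $g$ being the canonical partial inverse of $h$ is fine --- but as written the proposal defers exactly the step the theorem exists to establish.
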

  
  \begin{proof}
  We begin by showing the existence of a partial inverse to each $f\in F^+$.
  The approach is similar to the spectral theorem proof of \cite[Theorem 40.3]{L-Z}.
  Set
  $$\bar{f}_n=\sum_{j=0}^\infty \frac{j+1}{2^n} q_{n,j}\in {\cal B}_f$$
  and
  $$\underline{f}_n=\sum_{j=0}^\infty \frac{j}{2^n} q_{n,j}\in {\cal B}_f$$
  where $q_{n,j}=Q_{n,j}e$.
  Here
  $$Q_{n,j}=P_{\left(\frac{j+1}{2^n}e-f\right)^+}\left(I-P_{\left(\frac{j}{2^n}e-f\right)^+}\right),\quad j\ge 1,$$
  and 
  $$Q_{n,0}=P_fP_{\left(\frac{1}{2^n}e-f\right)^+}.$$
 Note that $q_{n,0}e\downarrow 0$ in order, as $n\to\infty$. 
 The convergence in order of the sums defining $\bar{f}_n$ and $\underline{f}_n$ is ensured as $F$ is universally complete
  and $q_{n,i}\wedge q_{n,j}=q_{n,i}q_{n,j}=0$ for all $i\ne j$, see \cite[Definition 47.3]{L-Z}.
  Here  $0\le \bar{f}_n-\underline{f}_n\le 2^{-n}e$ and
  $0\le \underline{f}_n\le f\le \bar{f}_n$ for each $n\in\N$. Consequently both $\bar{f}_n$ and $\underline{f}_n$ converge in order to $f$, $\bar{f}_n$ as a decreasing sequence and $\underline{f}_n$ as an increasing sequence.
  Setting $p_f:=P_fe$ we have
  $$\sum_{j=0}^\infty q_{n,j}=p_f$$
  for each $n\in\N$. 
  
  Let
  $$\bar{h}_n=\gamma_n+\sum_{j=1}^\infty \frac{2^n}{j} q_{n,j}\in {\cal B}_f$$
  and
  $$\underline{h}_n=\sum_{j=0}^\infty \frac{2^n}{j+1} q_{n,j}\in {\cal B}_f$$
  where
  $$\gamma_n=\sum_{j=n}^\infty 2^j(q_{j,0}-q_{j+1,0})\in {\cal B}_f.$$
  Here $\bar{h}_n$ is decreasing in $n$ and $\underline{h}_n$ is increasing in $n$ with
  $\underline{h}_n\le \bar{h}_n$. As such we are ensured that the sequences $\underline{h}_n$ and $\bar{h}_n$
  have order limits $\underline{h}$ and $\bar{h}$ in $F$ with $\underline{h}\le\bar{h}$.
  Further we observe that 
  $$\bar{h}_n-\underline{h}_n=\gamma_n-2^n q_{n,0}+\sum_{j=1}^\infty \frac{2^n}{j(j+1)} q_{n,j}\in {\cal B}_f.$$
  Take the integer part $$k(n)=\left[2^{2n/3}\right] +1$$ 
  then $$\frac{2^n}{k(n)(k(n)+1)}\to 0,\quad \mbox{and}\quad \frac{k(n)}{2^n}\to 0$$
  as $n\to \infty$.
  Thus 
$$\sum_{j=k(n)}^\infty \frac{2^n}{j(j+1)} q_{n,j}\le \frac{2^n}{k(n)(k(n)+1)}e\to 0$$
as $n\to\infty$.
Also
$$\gamma_n-2^nq_{n,0}+\sum_{j=1}^{k(n)-1} \frac{2^n}{j(j+1)} q_{n,j}\le P_{\left(\frac{k(n)}{2^n}e-f\right)^+}h_0\to (I-P_f)h_0=0$$
in order as $n\to\infty$.
 Combining the above gives that
 $\bar{h}_n-\underline{h}_n\to 0$ in order as $n\to\infty$. Hence $\bar{h}_n$ and $\underline{h}_n$ both converge to $h$ and 
  $$p_f-\frac{1}{2^n}h\ge p_f-\frac{1}{2^n}\underline{h}_n=\underline{h}_n(\bar{f}_n-\frac{1}{2^n}e)\ge\underline{h}_nf\le \underline{h}_n\bar{f}_n=p_f.$$
 Taking the order limit as $n\to \infty$ in the above gives that
$$hf=fh=p_f$$
and as $h\in {\cal B}_f$ , $h$ is the canonical partial inverse of $f$. 
  
   For general $g\in F$ we apply the above to $g^+$ and $g^-$ to get their canonical partial inverses which we denote $h^+$ and $h^-$ and set $h=h^+-h^-$. As $g^+\wedge g^-=0$ it follows that $h^+\wedge h^-=0$. Thus
   \begin{eqnarray*}
   gh&=&(g^+-g^-)(h^+-h^-)\\ &=&g^+h^+ - g^+h^- - g^-h^+ + g^-h^-\\
    &=&g^+h^+  + g^-h^-\\ &=&P_{g^+}e+P_{g^-}e=P_{|g|}e.
  \end{eqnarray*}

   For uniqueness of the canonical partial inverse we note that if $q,p$ are both canonical partial inverses of $f$ then $p,q\in {\cal B}_{|f|}$, so
   $$p=P_{|f|}p=pP_{|f|}e=pfq=(P_{|f|}e)q=P_{|f|}q=q$$
   showing the uniqueness.
  \qed
  \end{proof}


\end{document}